\newcommand{\klockan}{\the\hours:{\ifnum\minutes<10 0\fi}\the\minutes}
\newcommand{\tid}{\today\ \klockan}
\newcommand{\prtid}{\smash{\raise 10mm \hbox{\LaTeX ed \tid}}}
\renewcommand{\prtid}{}
\def\sectionmark#1{} %\markboth{{\sectnr #1}}{{\sectnr #1}}} %Journal
\def\subsectionmark#1{}
\newcommand{\sectnr}{\ifnum \c@secnumdepth >\z@
                 \thesection.\hskip 1em\relax \fi}
\def\@evenhead{\footnotesize\rm\thepage\hfil\leftmark\hfil\llap{\prtid}}
\def\@oddhead{\footnotesize\rm\rlap{\prtid}\hfil\rightmark\hfil\thepage}
\def\tableofcontents{\section*{Contents} %\@mkboth{Contents}{Contents}} %Journal
 \@starttoc{toc}}
\def\@biblabel#1{#1.}
\let\Thebibliography=\thebibliography
\renewcommand{\thebibliography}[1]{\def\@mkboth##1##2{}\Thebibliography{#1}
\addcontentsline{toc}{section}{References}
\frenchspacing % Maybe not needed
% Deleting extra vertical space
\setlength{\@topsep}{0pt}% Delete if extra space before list
\setlength{\itemsep}{0pt}%
\setlength{\parskip}{0pt plus 2pt}%
}
\def\mdots@{\mathinner.\nonscript\!.%
 \ifx\next,.\else\ifx\next;.\else\ifx\next..\else
 \nonscript\!\mathinner.\fi\fi\fi}
\let\ldots\mdots@
\let\cdots\mdots@
\let\dotso\mdots@
\let\dotsb\mdots@
\let\dotsm\mdots@
\let\dotsc\mdots@
\def\vdots{\vbox{\baselineskip2.8\p@ \lineskiplimit\z@
    \kern6\p@\hbox{.}\hbox{.}\hbox{.}\kern3\p@}}
\def\ddots{\mathinner{\mkern1mu\raise8.6\p@\vbox{\kern7\p@\hbox{.}}%
    \raise5.8\p@\hbox{.}\raise3\p@\hbox{.}\mkern1mu}}
\let\Enumerate=\enumerate
\renewcommand{\enumerate}{\Enumerate%
% Deleting extra vertical space
\setlength{\@topsep}{0pt}% Delete if extra space before list
\setlength{\itemsep}{0pt}%
\setlength{\parskip}{0pt plus 1pt}%
\renewcommand{\theenumi}{\textup{(\alph{enumi})}}%
\renewcommand{\labelenumi}{\theenumi}%
}
\let\endEnumerate=\endenumerate
\renewcommand{\endenumerate}{\endEnumerate\unskip}
\def\@seccntformat#1{\csname the#1\endcsname.\quad}
\newcommand{\authortitle}[2]{\author{#1}\title{#2}\markboth{#1}{#2}}
\newcommand{\auth}[2]{{#1, #2.}}
\newcommand{\art}[6]{{\sc #1, \rm #2, \it #3\/ \bf #4 \rm (#5), \mbox{#6}.}}
\newcommand{\artprep}[3]{{\sc #1, \rm #2, \rm #3.}}
\newcommand{\arttoappear}[3]{{\sc #1, \rm #2, to appear in \it #3}}
\newcommand{\book}[3]{{\sc #1, \it #2, \rm #3.}}
\newcommand{\AND}{{\rm and }}
\newtheoremstyle{descriptive}%
  {\topsep}   %{\medskipamount}          % Space above
  {\topsep}   %  {\medskipamount}          % Space below
  {\rmfamily} % Body font
  {}          % Indent
  {\bfseries} % Head font
  {.}         % Punctuation after thm head
  { }         % Space after thm head
  {}          % Thm head spec(?)
\newtheoremstyle{propositional}%
  {\topsep}   %  {\medskipamount}          % Space above
  {\topsep}   %  {\medskipamount}          % Space below
  {\itshape}  % Body font
  {}          % Indent
  {\bfseries} % Head font
  {.}         % Punctuation after thm head
  { }         % Space after thm head
  {}          % Thm head spec(?)
\theoremstyle{propositional}
\newtheorem{thm}{Theorem}[section]
\newtheorem{prop}[thm]{Proposition}
\newtheorem{lem}[thm]{Lemma}
\newtheorem{cor}[thm]{Corollary}
\theoremstyle{descriptive}
\newtheorem{deff}[thm]{Definition}
\newtheorem{example}[thm]{Example}
\newtheorem{remark}[thm]{Remark}
\newtheorem{openprob}[thm]{Open problem}
\renewenvironment{proof}[1][\proofname]{\par
  \pushQED{\qed}%
  \normalfont
%\topsep6\p@\@plus6\p@\relax % Removed by Anders Bj\"orn
  \trivlist
  \item[\hskip\labelsep
        \itshape
    #1\@addpunct{.}]\ignorespaces
}{%
  \popQED\endtrivlist\@endpefalse
}
\newcommand{\setm}{\setminus}
\renewcommand{\emptyset}{\varnothing}
\renewcommand{\subsetneq}{\varsubsetneq}
\renewcommand{\varsubsetneqq}{\varsubsetneq}
\def\vint{\mathop{\mathchoice%
          {\setbox0\hbox{$\displaystyle\intop$}\kern 0.22\wd0%
           \vcenter{\hrule width 0.6\wd0}\kern -0.82\wd0}%
          {\setbox0\hbox{$\textstyle\intop$}\kern 0.2\wd0%
           \vcenter{\hrule width 0.6\wd0}\kern -0.8\wd0}%
          {\setbox0\hbox{$\scriptstyle\intop$}\kern 0.2\wd0%
           \vcenter{\hrule width 0.6\wd0}\kern -0.8\wd0}%
          {\setbox0\hbox{$\scriptscriptstyle\intop$}\kern 0.2\wd0%
           \vcenter{\hrule width 0.6\wd0}\kern -0.8\wd0}}%
          \mathopen{}\int}
\newcommand{\Cp}{C_p}
\newcommand{\CpX}{{C_p^X}}
\newcommand{\CpXhat}{{C_p^{\Xhat}}}
\newcommand{\CpOm}{{C_p^\Om}}
\newcommand{\grad}{\nabla}
\DeclareMathOperator{\Lip}{Lip}
\DeclareMathOperator{\dist}{dist}
\DeclareMathOperator{\diam}{diam}
\DeclareMathOperator{\dvg}{div}
\DeclareMathOperator{\supp}{supp}
\DeclareMathOperator*{\essliminf}{ess\,lim\,inf}
\newcommand{\bdry}{\partial}
\newcommand{\loc}{_{\rm loc}}
{\catcode`p =12 \catcode`t =12 \gdef\eeaa#1pt{#1}}      % Get slantfactor
\def\accentadjtext#1{\setbox0\hbox{$#1$}\kern   % Convert it with height
                \expandafter\eeaa\the\fontdimen1\textfont1 \ht0 }
\def\accentadjscript#1{\setbox0\hbox{$#1$}\kern % Convert it with height
                \expandafter\eeaa\the\fontdimen1\scriptfont1 \ht0 }
\def\accentadjscriptscript#1{\setbox0\hbox{$#1$}\kern   % Convert it with height
                \expandafter\eeaa\the\fontdimen1\scriptscriptfont1 \ht0 }
\def\accentadjtextback#1{\setbox0\hbox{$#1$}\kern       % Convert it with height
                -\expandafter\eeaa\the\fontdimen1\textfont1 \ht0 }
\def\accentadjscriptback#1{\setbox0\hbox{$#1$}\kern     % Convert it with height
                -\expandafter\eeaa\the\fontdimen1\scriptfont1 \ht0 }
\def\accentadjscriptscriptback#1{\setbox0\hbox{$#1$}\kern % Convert it with height
                -\expandafter\eeaa\the\fontdimen1\scriptscriptfont1 \ht0 }
\def\itoverline#1{{\mathsurround0pt\mathchoice
        {\rlap{$\accentadjtext{\displaystyle #1}
                \accentadjtext{\vrule height1.593pt}
                \overline{\phantom{\displaystyle #1}
                \accentadjtextback{\displaystyle #1}}$}{#1}}
        {\rlap{$\accentadjtext{\textstyle #1}
                \accentadjtext{\vrule height1.593pt}
                \overline{\phantom{\textstyle #1}
                \accentadjtextback{\textstyle #1}}$}{#1}}
        {\rlap{$\accentadjscript{\scriptstyle #1}
                \accentadjscript{\vrule height1.593pt}
                \overline{\phantom{\scriptstyle #1}
                \accentadjscriptback{\scriptstyle #1}}$}{#1}}
        {\rlap{$\accentadjscriptscript{\scriptscriptstyle #1}
                \accentadjscriptscript{\vrule height1.593pt}
                \overline{\phantom{\scriptscriptstyle #1}
                \accentadjscriptscriptback{\scriptscriptstyle #1}}$}{#1}}}}
\newcommand{\al}{\alpha}
\newcommand{\ga}{\gamma}
\newcommand{\dmu}{d\mu}
\newcommand{\de}{\delta}
\newcommand{\eps}{\varepsilon}
\newcommand{\la}{\lambda}
\newcommand{\Om}{\Omega}
\renewcommand{\phi}{\varphi}
\newcommand{\p}{{$p\mspace{1mu}$}}
\newcommand{\R}{\mathbf{R}}
\newcommand{\eR}{{\overline{\R}}}
\newcommand{\Q}{\mathbf{Q}}
\newcommand{\limplus}{{\mathchoice{\raise.17ex\hbox{$\scriptstyle +$}}
                {\raise.17ex\hbox{$\scriptstyle +$}}
                {\raise.1ex\hbox{$\scriptscriptstyle +$}}
                {\scriptscriptstyle +}}}
\newcommand{\limminus}{{\mathchoice{\raise.17ex\hbox{$\scriptstyle -$}}
                {\raise.17ex\hbox{$\scriptstyle -$}}
                {\raise.1ex\hbox{$\scriptscriptstyle -$}}
                {\scriptscriptstyle -}}}
\newcommand{\Np}{N^{1,p}}
\newcommand{\Nploc}{N^{1,p}\loc}
\newcommand{\Nxloc}{\Np_{\rm loc,{\bf x}}}
\newcommand{\Ncptloc}{\Np_{\rm loc,cpt}}
\newcommand{\Ndistloc}{\Np_{\rm loc,dist}}
\newcommand{\Nbdyloc}{\Np_{\rm loc,bdy}}
\newcommand{\Nclosloc}{\Np_{\rm loc,clos}}
\newcommand{\Nxo}{\Np_{\rm 0,{\bf x}}}
\newcommand{\Ncpto}{\Np_{\rm 0,cpt}}
\newcommand{\Ndisto}{\Np_{\rm 0,dist}}
\newcommand{\Nbdyo}{\Np_{\rm 0,bdy}}
\newcommand{\Ncloso}{\Np_{\rm 0,clos}}
\newcommand{\Nyo}{\Np_{\rm 0,{\bf y}}}
\newcommand{\Dp}{D^p}
\newcommand{\Dploc}{D^{p}\loc}
\newcommand{\Lploc}{L^p\loc}
\newcommand{\Ga}{\Gamma}
\newcommand{\Xhat}{{\widehat{X}}}
\newcommand{\xhat}{{\hat{x}}}
\newcommand{\Omhat}{\Om^\wedge}
\newcommand{\Bhat}{{\widehat{B}}}
\newcommand{\Ghat}{{\widehat{G}}}
\newcommand{\ghat}{{\hat{g}}}
\newcommand{\uhat}{{\hat{u}}}
\newcommand{\phihat}{{\widehat{\phi}}}
\newcommand{\ut}{{\tilde{u}}}
\newcommand{\clG}{\itoverline{G}}
\newcommand{\Bik}{B_{ik}}
\newcommand{\Bhatik}{{B^\wedge_{ik}}}
\newcommand{\rik}{r_{ik}}
\newcommand{\phiik}{\phi_{ik}}
\newcommand{\Bjk}{B_{jk}}
\newcommand{\rjk}{r_{jk}}
\newcommand{\Gx}{{\mathcal G}_{\rm\bf x}}
\newcommand{\Ex}{{\mathcal E}_{\rm\bf x}}
\newcommand{\Gcpt}{{\mathcal G}_{\rm cpt}}
\newcommand{\Gdist}{{\mathcal G}_{\rm dist}}
\newcommand{\Gbdy}{{\mathcal G}_{\rm bdy}}
\newcommand{\Gclos}{{\mathcal G}_{\rm clos}}
\newcommand{\CPI}{C_{\rm PI}}
\newcommand{\Npc}{{N^{1,p}_c}}
\newcommand{\Borel}{\mathcal{B}}
\newcommand{\Borelh}{\widehat{\mathcal{B}}}
\newcommand{\Meas}{\mathcal{M}}
\newcommand{\Meash}{\widehat{\mathcal{M}}}
\numberwithin{equation}{section}
\newenvironment{ack}{\medskip{\it Acknowledgement.}}{}
\begin{document}

\authortitle{Anders Bj\"orn and Jana Bj\"orn}
{Poincar\'e inequalities and Newtonian Sobolev functions 
on noncomplete metric spaces}

\author{
Anders Bj\"orn \\
\it\small Department of Mathematics, Link\"oping University, \\
\it\small SE-581 83 Link\"oping, Sweden\/{\rm ;}
\it \small anders.bjorn@liu.se
\\
\\
Jana Bj\"orn \\
\it\small Department of Mathematics, Link\"oping University, \\
\it\small SE-581 83 Link\"oping, Sweden\/{\rm ;}
\it \small jana.bjorn@liu.se
}

\date{}

\maketitle

\noindent{\small {\bf Abstract}. 
Let $X$ be a noncomplete metric space satisfying the usual (local) 
assumptions of a doubling property and a Poincar\'e inequality.
We study extensions of Newtonian Sobolev functions to the completion 
$\Xhat$ of $X$ and use them to obtain several results on $X$ itself,
in particular concerning minimal weak upper gradients,
Lebesgue points, quasicontinuity, 
regularity properties of the capacity and better Poincar\'e inequalities.
We also provide a discussion about possible applications of the 
completions and extension results to \p-harmonic functions on noncomplete
spaces and show by examples that this is a rather delicate issue opening 
for various interpretations and new investigations.

\bigskip
\noindent
{\small \emph{Key words and phrases}:
Lebesgue point,
local doubling,
locally compact metric space,
noncomplete metric space, 
Newtonian space,
nonlinear potential theory, 
\p-harmonic function,
Poincar\'e inequality,
quasicontinuity,
quasiminimizer,
semilocal doubling,
Sobolev space.
}

\medskip
\noindent
{\small Mathematics Subject Classification (2010):
Primary: 31E05; Secondary: 30L99, 31C45,  35J60, 46E35
}
}

\section{Introduction}

Our aim in this paper is to study 
Poincar\'e inequalities and Newtonian (Sobolev) functions 
on noncomplete metric spaces,   
and primarily to do so using their completion. 
This turns out to be a rather fruitful approach which, however, has
certain subtleties and limitations, in particular when dealing with
\p-harmonic functions.

Let $X=(X,d,\mu)$ be a metric measure spaces,
where $\mu$ is a positive complete  Borel  measure $\mu$
such that $0<\mu(B)<\infty$ for all balls $B \subset X$.
We let $\Xhat$ be the completion of $X$ with respect to the metric $d$,
and extend $d$ and $\mu$ to $\Xhat$ so that $\mu(\Xhat \setm X)=0$.
Also let $1<p<\infty$.

Much of analysis on metric spaces has been done assuming
global doubling and global Poincar\'e inequalities,
which for instance are assumed in the monographs
Haj\l asz--Koskela~\cite{HaKo}, Bj\"orn--Bj\"orn~\cite{BBbook} and
Heinonen--Koskela--Shanmugalingam--Tyson~\cite{HKSTbook}.
For wider applicability we study properties
that hold under more local assumptions.
Such assumptions have earlier been considered e.g.\
by Cheeger~\cite{Cheeg},
Danielli--Garofalo--Marola~\cite{DaGaMa}, Garofalo--Marola~\cite{GaMa} 
and Holopainen--Shan\-mu\-ga\-lin\-gam~\cite{HoSh}. 
In the following definition we
follow the recent terminology from Bj\"orn--Bj\"orn~\cite{BBsemilocal},
where a more extensive discussion of these assumptions can be found.

\begin{deff} \label{def-local-intro}
The measure \emph{$\mu$ is doubling within a ball $B(x_0,r_0)$}
if there is $C>0$ (depending on $x_0$ and $r_0$)
such that $\mu(2B)\le C \mu(B)$ for all balls $B \subset B(x_0,r_0)$.

Similarly, the
\emph{\p-Poincar\'e inequality holds within a ball $B(x_0,r_0)$} 
if there are constants $C>0$ and $\lambda \ge 1$ (depending on $x_0$ and $r_0$)
such that for all balls $B\subset B(x_0,r_0)$, 
all integrable functions $u$ on $\la B$, and all upper gradients $g$ of $u$, 
\[ 
        \vint_{B} |u-u_B| \,\dmu
        \le C r_B \biggl( \vint_{\lambda B} g^{p} \,\dmu \biggr)^{1/p},
\] 
where $ u_B 
 :=\vint_B u \,d\mu 
:= \int_B u\, d\mu/\mu(B)$.
These properties are called \emph{local} if 
for every $x_0 \in X$ there is $r_0>0$ 
(depending on $x_0$) such that the doubling property or the \p-Poincar\'e
inequality holds within $B(x_0,r_0)$.
They are called \emph{semilocal} if they hold within every ball in $X$.
\end{deff}

Our first observation is 
that if $\mu$ is doubling (resp.\ supports a \p-Poincar\'e inequality)
within a ball $B(x_0,r_0)$ in $X$  then its zero extension
is also doubling 
(resp.\ supports a \p-Poincar\'e inequality) within the
corresponding ball $\Bhat(x_0,r_0)$ in $\Xhat$.
In particular, this means that the  
semilocal assumptions
extend from $X$ to $\Xhat$, see Corollaries~\ref{cor-semilocal-doubling}
and~\ref{cor-Xhat-PI}.
On the other hand, local doubling (resp.\ a local \p-Poincar\'e inequality)
on $X$ does not extend to $\Xhat$,
even though it does extend to a locally compact open subset of $\Xhat$
containing $X$ (see Lemma~\ref{lem-ext-Nploc}), which may be sufficient
for many applications.

The following extension result is one of the main results in this paper.
(See Theorem~\ref{thm-Xhat-semi} for a more extensive version.)
For an open set $\Om$ in $X$, we let 
\[
\Omhat=\Xhat \setm \itoverline{X \setm \Om},
\]
where the closure is taken in $\Xhat$. 
This makes $\Omhat$ into 
the largest open set in $\Xhat$ such that
$\Om = \Omhat \cap X$.

\begin{thm} \label{thm-Xhat-intro}
Assume that the doubling property
and the \p-Poincar\'e inequality hold within the ball $B_0$
in the sense of Definition~\ref{def-local-intro}. 
Let $\Om\subset B_0$ be open and $u \in \Np(\Om)$.
Then the function
\[
    \uhat(x)=\limsup_{r \to 0} \vint_{\Bhat(x,r) \cap X} u \, d\mu,
   \quad x \in \Omhat,
\]
belongs to $\Np(\Omhat)$ and is a pointwise extension of a representative
of $u$ to $\Omhat$.
Moreover, the minimal \p-weak upper gradients
$g_{\uhat}$ and $g_u$ of $\uhat$ and $u$ with respect to $\Xhat$ and $X$
satisfy
\[ 
  g_{\uhat} \le A_{0} g_u \quad \text{a.e.\ in }\Om,
\] 
where $A_{0}$ is a constant only depending on $p$,
the doubling 
constant and both constants in the \p-Poincar\'e inequality within $B_0$.
\end{thm}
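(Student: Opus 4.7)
The starting point is to transfer the local hypotheses to $\Xhat$: by the extension corollaries referenced in the introduction, the measure $\mu$ on $\Xhat$ is doubling and supports a $p$-Poincar\'e inequality within $\Bhat_0:=\Bhat(x_0,r_0)\supset\Omhat$, with constants inherited from $B_0$. Since $\mu(\Xhat\setm X)=0$, the defining average of $u$ over $\Bhat(x,r)\cap X$ agrees with the $\mu$-average over $\Bhat(x,r)$, so $\uhat$ is simply the $\limsup$ of $\mu$-averages on $(\Xhat,\mu)$. Lebesgue's differentiation theorem in this doubling space gives $\uhat=u$ $\mu$-a.e.\ in $\Om$, and Lebesgue-point theory for Newtonian functions on $\Omhat$ (now a doubling Poincar\'e space) upgrades this to equality outside a set of $p$-capacity zero, so that $\uhat$ is a pointwise extension of a representative of $u$.

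Let $\hat g\in L^p(\Omhat)$ denote the zero extension of $g_u$. The main task is to prove that some constant multiple $A_0\hat g$ is a $p$-weak upper gradient of $\uhat$ on $\Omhat$; once this is done, $\uhat\in\Np(\Omhat)$, and minimality of $g_{\uhat}$ restricted to $\Om$ yields the claimed bound. A standard dyadic telescoping along $r_k:=2^{-k}r$, together with the $p$-Poincar\'e inequality and doubling within $\Bhat_0$, produces
\[
 |u_{\Bhat(x,r_k)}-u_{\Bhat(x,r_{k+1})}|\le C r_k\biggl(\vint_{\lambda\Bhat(x,r_k)}\hat g^p\,\dmu\biggr)^{1/p},
\]
which, after summation and a standard connecting-ball argument for two points $x,y$ in a ball $\Bhat\subset\Omhat$, yields the Haj\l asz-type pointwise estimate
\[
 |\uhat(x)-\uhat(y)|\le C d(x,y)\bigl(\mathcal M(\hat g^p)(x)^{1/p}+\mathcal M(\hat g^p)(y)^{1/p}\bigr),
\]
where $\mathcal M$ is a localised Hardy--Littlewood maximal operator on $\Xhat$. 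The Koskela--MacManus theorem (applied inside $\Xhat$, which is now a doubling Poincar\'e space on $\Omhat$) then converts this into a $p$-weak upper gradient of the form $C\mathcal M(\hat g^p)^{1/p}\in L^p(\Omhat)$, and a Lebesgue-point argument for $\hat g^p$ in $\Xhat$ replaces the maximal function by $\hat g$ itself up to a constant factor depending only on $p$ and on the doubling/Poincar\'e constants within $B_0$.

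The subtlest step is this last passage from the pointwise Lipschitz-type inequality to a genuine upper gradient, because it must hold along $p$-almost every rectifiable curve in $\Omhat$, and such curves may freely cross the $\mu$-null set $\Xhat\setm X$ on which $u$ was never defined. This is precisely where transferring the doubling property and the Poincar\'e inequality to $\Xhat$ is indispensable: only in $\Xhat$ can one control the $p$-capacity, and hence the $p$-modulus, of the exceptional set where the pointwise estimate fails, so that a modulus-zero family of bad curves can be discarded. With this in hand the Koskela--MacManus mechanism closes, and a careful tracking of constants through the telescoping, the maximal theorem, and the Poincar\'e inequality shows that the resulting $A_0$ depends only on $p$ and on the doubling and Poincar\'e constants within $B_0$, as stated.
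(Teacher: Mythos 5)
The paper proves this theorem by directly citing Theorem~\ref{thm-Xhat-semi} and Proposition~\ref{prop-Leb-pt}; the substantive work is in Theorem~\ref{thm-Xhat-semi}, whose proof uses discrete convolutions (Whitney-type partitions of unity as in Heikkinen--Koskela--Tuominen), extends the resulting Lipschitz functions $\uhat_k$ to $\Omhat$, passes to weak $L^p$ limits, and concludes with a Lebesgue-point argument. Your route via a Haj\l asz-type pointwise estimate and the Koskela--MacManus mechanism is a genuinely different strategy, and much of the preparation (transferring doubling and the Poincar\'e inequality to $\Xhat$, identifying averages over $\Bhat(x,r)\cap X$ with $\mu$-averages, upgrading a.e.\ agreement to q.e.\ once membership in $\Np$ is known) is fine.

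The gap is in the final step. From the Haj\l asz estimate
\[
 |\uhat(x)-\uhat(y)|\le C d(x,y)\bigl(\mathcal M(\hat g^p)(x)^{1/p}+\mathcal M(\hat g^p)(y)^{1/p}\bigr)
\]
you obtain $g_{\uhat}\le C\,\mathcal M(\hat g^p)^{1/p}$ a.e., and you then assert that ``a Lebesgue-point argument for $\hat g^p$ replaces the maximal function by $\hat g$ itself up to a constant factor.'' This does not work as stated: the Hardy--Littlewood maximal function dominates the function pointwise (indeed $\mathcal M(\hat g^p)\ge \hat g^p$ a.e.), so knowing $g_{\uhat}\le C\,\mathcal M(\hat g^p)^{1/p}$ gives no control by $\hat g$ alone, and Lebesgue points of $\hat g^p$ do not repair this. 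To make this route succeed, you would have to carry out the telescoping only down to a fixed scale $\delta>0$, which yields the \emph{restricted} maximal function $\mathcal M_\delta$ in place of $\mathcal M$; this gives $g_{\uhat}\le C\,\mathcal M_\delta(\hat g^p)^{1/p}$ a.e.\ for every $\delta>0$, and only then does letting $\delta\to0$ (together with Fuglede's lemma and Lebesgue points of $\hat g^p$) give $g_{\uhat}\le C\hat g$ a.e.\ in $\Om$. The paper's proof sidesteps this entirely: the discrete-convolution scales $\eps_k\to 0$ already build the restriction to small scales into the construction, so testing the weak $L^p$ limit of $g_{\uhat_k}$ against small balls localizes automatically, and no maximal function appears.

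A secondary point worth flagging: when deriving the telescoping estimate you invoke ``the $p$-Poincar\'e inequality within $\Bhat_0$,'' but that inequality on $\Xhat$ is stated for upper gradients with respect to $\Xhat$, and $\hat g$ is not (yet) known to be one --- that is precisely what is being proved, so applying it directly would be circular. One instead applies the Poincar\'e inequality on $X$ (within $B_0$) to the pair $(u,g_u)$ on nearby balls $B(x_\eps,\cdot)$ and passes to the limit exactly as in the proof of Proposition~\ref{prop-X-Xhat-PI}, keeping track that the relevant dilated balls stay inside $\Om$; this is the same care that the Whitney balls $10\la B_{ik}\subset\Om$ provide for free in the paper's argument.
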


For $\Om=X$, with $X$ locally compact and under global assumptions,
similar extension results appear in 
Aikawa--Shanmugalingam~\cite[Proposition~7.1]{AikSh05} and
Hei\-no\-nen--Kos\-ke\-la--Shan\-mu\-ga\-lin\-gam--Ty\-son~\cite[Lemma~8.2.3]{HKSTbook}.
Theorem~\ref{thm-Xhat-intro}
makes it possible to study functions $u$ on $X$ using properties
known to hold for their extensions $\uhat$ on $\Xhat$.
We use this to obtain 
some $L^p$-Lebesgue point  and quasicontinuity results 
for Newtonian Sobolev functions in noncomplete spaces.

When $X$ is complete and $\mu$ is globally doubling, 
a deep result due to Keith--Zhong~\cite[Theorem~1.0.1]{KZ}
shows that the  Poincar\'e inequality is an open-ended property,
in the sense that if $\mu$  supports a global \p-Poincar\'e inequality then
it also supports a global $q$-Poincar\'e inequality for some $q<p$.
Counterexamples due to Koskela~\cite{Koskela} show that this
is false for  locally compact $X$.
Nevertheless, by localizing the arguments in~\cite{KZ}
local versions of this self-improvement result were obtained in 
\cite{BBsemilocal} for locally compact spaces.
In Section~\ref{sect-self-imp-PI},
we further generalize these results 
to non-locally compact spaces,
using our extension theorem as the key tool.

We end the paper with a discussion on \p-harmonic functions (and more generally
quasiminimizers and quasisuperminimizers) on noncomplete spaces with particular
emphasis on locally compact spaces.
It turns out that the choice of the test function space and the 
local Newtonian space for \p-harmonic functions plays an important 
role for the validity of several of the fundamental properties of \p-harmonic
functions, such as various Harnack inequalities and maximum principles.
There are several different natural choices of these spaces,  which all coincide
in the complete case.

Thus, it is the intended applications and particular results, which 
essentially determine
the ``right definition'' of \p-harmonic functions for various
purposes in noncomplete spaces.
The continuity of \p-harmonic functions is, 
however, possible 
to obtain under most of these definitions, see 
Theorem~\ref{thm-semiloc-int-reg}.

Some of this versatility is demonstrated in Example~\ref{ex-slit-plane} 
and it is for instance
possible to treat mixed and Neumann boundary data
as special cases of Dirichlet data.
In complete spaces, most of the suggested definitions reduce to the usual 
definition of \p-harmonic functions.

\begin{ack}
The authors were supported by the Swedish Research Council, 
grants 621-2014-3974 and 2016-03424.
We thank Nageswari Shanmugalingam for helpful discussions 
on some results in the paper.
\end{ack}

\section{Upper gradients and Newtonian spaces}
\label{sect-prelim}

We assume throughout the paper
that $X=(X,d,\mu)$ is a metric space equipped
with a metric $d$ and a positive complete  Borel  measure $\mu$
such that $0<\mu(B)<\infty$ for all 
balls $B \subset X$.
It follows that $X$ is  separable and Lindel\"of.
We also assume that $1<p<\infty$, although the
results in Sections~\ref{sect-prelim}
and~\ref{sect-doubling} also hold if $p=1$.
Proofs of the results in 
this section can be found in the monographs
Bj\"orn--Bj\"orn~\cite{BBbook} and
Heinonen--Koskela--Shanmugalingam--Tyson~\cite{HKSTbook}.

A \emph{curve} is a continuous mapping from an interval,
and a \emph{rectifiable} curve is a curve with finite length.
Unless said otherwise, we
will only consider curves which are nonconstant, compact and 
rectifiable, and thus each curve can 
be parameterized by its arc length $ds$. 
A property is said to hold for \emph{\p-almost every curve}
if it fails only for a curve family $\Ga$ with zero \p-modulus, 
i.e.\ there exists $0\le\rho\in L^p(X)$ such that 
$\int_\ga \rho\,ds=\infty$ for every curve $\ga\in\Ga$.

Following Heinonen--Kos\-ke\-la~\cite{HeKo98},
we introduce upper gradients 
as follows 
(they called them very weak gradients).

\begin{deff} \label{deff-ug}
A Borel function $g: X \to [0,\infty]$  is an \emph{upper gradient} 
of a function $f:X \to \eR:=[-\infty,\infty]$
if for all  curves  
$\gamma \colon [0,l_{\gamma}] \to X$,
\begin{equation} \label{ug-cond}
        |f(\gamma(0)) - f(\gamma(l_{\gamma}))| \le \int_{\gamma} g\,ds,
\end{equation}
where the left-hand side is considered to be $\infty$ 
whenever at least one of the 
terms therein is infinite.
If $g:X \to [0,\infty]$ is measurable 
and \eqref{ug-cond} holds for \p-almost every curve,
then $g$ is a \emph{\p-weak upper gradient} of~$f$. 
\end{deff}

The \p-weak upper gradients were introduced in
Koskela--MacManus~\cite{KoMc}. 
It was also shown therein
that if $g \in \Lploc(X)$ is a \p-weak upper gradient of $f$,
then one can find a sequence $\{g_j\}_{j=1}^\infty$
of upper gradients of $f$ such that $\|g_j-g\|_{L^p(X)} \to 0$.
If $f$ has an upper gradient in $\Lploc(X)$, then
it has an a.e.\ unique \emph{minimal \p-weak upper gradient} $g_f \in \Lploc(X)$
in the sense that for every \p-weak upper gradient $g \in \Lploc(X)$ of $f$ we have
$g_f \le g$ a.e., see Shan\-mu\-ga\-lin\-gam~\cite{Sh-harm}.
Following Shanmugalingam~\cite{Sh-rev}, 
we define a version of Sobolev spaces on the metric space $X$.

\begin{deff} \label{deff-Np}
For a measurable function $f:X\to \eR$, let 
\[
        \|f\|_{\Np(X)} = \biggl( \int_X |f|^p \, d\mu 
                + \inf_g  \int_X g^p \, d\mu \biggr)^{1/p},
\]
where the infimum is taken over all upper gradients $g$ of $f$.
The \emph{Newtonian space} on $X$ is 
\[
        \Np (X) = \{f: \|f\|_{\Np(X)} <\infty \}.
\]
\end{deff}

The quotient
space $\Np(X)/{\sim}$, where  $f \sim h$ if and only if $\|f-h\|_{\Np(X)}=0$,
is a Banach space and a lattice, see Shan\-mu\-ga\-lin\-gam~\cite{Sh-rev}.
We also define
\[
   \Dp(X)=\{f : f \text{ is measurable and  has an upper gradient
     in }   L^p(X)\}.
\]
In this paper we assume that functions in $\Np(X)$
and $\Dp(X)$
 are defined everywhere (with values in $\eR$),
not just up to an equivalence class in the corresponding function space.
This is important for upper gradients to make sense.

For a measurable set $E\subset X$, the Newtonian space $\Np(E)$ is defined by
considering $(E,d|_E,\mu|_E)$ as a metric space in its own right.
We say  that $f \in \Nploc(E)$ if
for every $x \in E$ there exists a ball $B_x\ni x$ such that
$f \in \Np(B_x \cap E)$.
The spaces $\Dp(E)$ and $\Dploc(E)$ are defined similarly.
If $f,h \in \Dploc(X)$, then $g_f=g_h$ a.e.\ in $\{x \in X : f(x)=h(x)\}$,
in particular for $c \in \R$ we have
$g_{\min\{f,c\}}=g_f \chi_{\{f < c\}}$ a.e.

\begin{deff}
The (Sobolev) \emph{capacity}  of a set $E\subset X$  is the number 
\begin{equation*} 
   \CpX(E)=\Cp(E) =\inf_u    \|u\|_{\Np(X)}^p,
\end{equation*}
where the infimum is taken over all $u\in \Np (X)$ such that $u=1$ on $E$.
\end{deff}

We say that a property holds \emph{quasieverywhere} (q.e.)\ 
if the set of points  for which the property does not hold 
has capacity zero. 
The capacity is the correct gauge 
for distinguishing between two Newtonian functions. 
Namely, if $u \in \Np(X)$ then $u \sim v$ if and only if $u=v$ q.e.
Moreover, if $u,v \in \Dploc(X)$ and $u= v$ a.e., then $u=v$ q.e.

We let $B=B(x,r)=\{y \in X : d(x,y) < r\}$ denote the ball
with centre $x$ and radius $r$, and let $\la B=B(x,\la r)$.
We assume throughout the paper that balls are open.
In metric spaces it can happen that
balls with different centres and/or radii 
denote the same set. 
We will however make the convention that a ball $B$ comes with
a predetermined centre and radius $r_B$. 
Note that it can happen that  $B(x_0,r_0) \subset B(x_1,r_1)$
even when $r_0 > r_1$.
In disconnected spaces this can happen also when $r_0 > 2 r_1$.
If $X$ is connected, then $B(x_0,r_0) \subset B(x_1,r_1)$ with
$r_0 >2r_1$ is possible only when  $B(x_0,r_0)= B(x_1,r_1)=X$.

\section{Local doubling and Poincar\'e inequalities}
\label{sect-doubling}

Our aim in this paper is to study noncomplete spaces $X$ and 
primarily to do so using their completion $\Xhat$.
The completion is taken with respect to the metric $d$, whose extension
to $\Xhat$ is also denoted $d$.
The measure $\mu$ is extended so that $\mu(\Xhat \setm X)=0$
and so that
\[
  \Meash=\{E \subset \Xhat : E \cap X \in \Meas\}
\]
is the $\sigma$-algebra of measurable sets on $\Xhat$,
where $\Meas$ is the $\sigma$-algebra of measurable sets on $X$.

\begin{lem}
$\mu$ is a complete Borel regular measure on $\Xhat$.
  Moreover, if $\Borel$ and $\Borelh$ are the Borel $\sigma$-algebras
 on $X$ and $\Xhat$,
  respectively, then
  \begin{equation} \label{eq-Borel}
      \Borel = \{E \cap X : E \in \Borelh\}.
  \end{equation}
\end{lem}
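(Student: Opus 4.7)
The plan is to prove~\eqref{eq-Borel} first, since it underpins every other part of the lemma. Because $d$ on $X$ is the restriction of the extended metric on $\Xhat$, the topology on $X$ coincides with the subspace topology inherited from $\Xhat$. The family $\{E\subset\Xhat:E\cap X\in\Borel\}$ is therefore a $\sigma$-algebra containing every open subset of $\Xhat$, hence contains $\Borelh$, which yields $\{E\cap X:E\in\Borelh\}\subset\Borel$. For the reverse inclusion, every open set of $X$ is a countable union of balls $B(x,r)$ with $x\in X$, and each such ball equals $\Bhat(x,r)\cap X$; a standard $\sigma$-algebra argument then lifts every open set of $X$, and hence every Borel set of $X$, to a Borel set of $\Xhat$.

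Granted \eqref{eq-Borel}, the remaining claims other than Borel regularity follow mechanically. $\Meash$ is a $\sigma$-algebra because $A\mapsto A\cap X$ commutes with complements and countable unions, and the extension $\mu(E):=\mu(E\cap X)$ is countably additive on $\Meash$ for the same reason. Borel subsets of $\Xhat$ are $\mu$-measurable by \eqref{eq-Borel}: if $E\in\Borelh$ then $E\cap X\in\Borel\subset\Meas$, so $E\in\Meash$. Completeness transfers directly from $X$: if $A\subset N\in\Meash$ with $\mu(N)=\mu(N\cap X)=0$, then $A\cap X$ is a subset of the $\mu$-null set $N\cap X$ and hence measurable in $X$ by completeness of $\mu$ on $X$, so $A\in\Meash$.

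Borel regularity is the delicate step, and I plan to derive it from Borel regularity of $\mu$ on $X$ (which is part of the standing hypothesis, and in any event is a standard property of $\sigma$-finite complete Borel measures on separable metric spaces; recall that $X$ is separable and exhausted by countably many balls of finite $\mu$-measure). Given $E\in\Meash$, Borel regularity on $X$ produces $F\in\Borel$ with $E\cap X\subset F$ and $\mu(F\setm(E\cap X))=0$, and by \eqref{eq-Borel} there is $F'\in\Borelh$ with $F'\cap X=F$. Then $(E\triangle F')\cap X=F\setm(E\cap X)$ is $\mu$-null in $X$, while $(E\triangle F')\setm X\subset\Xhat\setm X$ has $\mu$-measure zero on $\Xhat$ by the very definition of the extension, so $E$ differs from the Borel set $F'$ by a $\mu$-null set, as required. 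The main obstacle I anticipate is that $\Xhat\setm X$ need not itself be a Borel subset of $\Xhat$, so one cannot naively form $F'\cup(\Xhat\setm X)$ as a Borel cover of $E$; the way around this is to phrase the approximation via a symmetric difference with a (possibly non-Borel) $\mu$-null set and to rely on completeness of the extended $\mu$ to absorb the non-Borel part of the discrepancy, which is always contained in the $\mu$-null set $\Xhat\setm X$.
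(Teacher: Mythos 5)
Your proof of \eqref{eq-Borel} is, up to cosmetics, the paper's two-sided $\sigma$-algebra sandwich: $\{E \subset \Xhat : E \cap X \in \Borel\}$ is a $\sigma$-algebra containing the open sets of $\Xhat$, hence $\Borelh$, while $\{E \cap X : E \in \Borelh\}$ is a $\sigma$-algebra containing the open sets of $X$ (whether one sees this directly from the subspace topology, as the paper does, or by writing each open set of $X$ as a countable union of balls $B(x,r)=\Bhat(x,r)\cap X$, as you do). The completeness argument is likewise the paper's, spelled out.

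Where you genuinely improve on the paper is the Borel regularity step, which the paper disposes of in a single sentence. Your symmetric-difference formulation---every $E\in\Meash$ differs from some $F'\in\Borelh$ by a $\mu$-null set---is the right one, and your proof of it is sound once Borel regularity of $\mu$ on $X$ is granted. The obstacle you flag is real and in fact sharper than you seem to suspect: the stronger ``outer'' form of Borel regularity, that every $A\subset\Xhat$ is contained in a Borel set of the same outer measure, can genuinely fail under the standing hypotheses. Take $\Xhat=[0,1]$, let $X\subset[0,1]$ be a Bernstein set (dense, with inner Lebesgue measure $0$ and outer Lebesgue measure $1$), and let $\mu$ be the trace of Lebesgue measure on $X$; then no Borel $B\supset[0,1]\setm X$ is $\mu$-null, since its complement would be a Borel subset of $X$ of positive Lebesgue measure, contradicting inner measure zero. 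So the symmetric-difference framing is a necessity rather than a convenience, and your observation that $\Xhat\setm X$ need not be Borel in $\Xhat$ pinpoints exactly why; the paper's terse justification glosses over this. (When $X$ is locally compact, $X$ is open in $\Xhat$ and the two forms agree.) One small overstatement on your side: Borel regularity of $\mu$ on $X$ is \emph{not} automatic for every ``$\sigma$-finite complete Borel measure on a separable metric space''---a complete measure whose $\sigma$-algebra strictly exceeds the completion of $\Borel$ can fail it---but it is what the standing assumption of a complete Borel measure is intended to convey, so invoking it is legitimate; just drop the parenthetical claim that it holds ``in any event.''
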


\begin{proof}
We start by proving \eqref{eq-Borel}.
As $\Borelh$ is a $\sigma$-algebra it follows directly that
$\Borel':=\{E \cap X : E \in \Borelh\}$ is a $\sigma$-algebra,
and since it contains all open sets on $X$ it must contain $\Borel$.
Conversely, 
$\{E \subset \Xhat : E \cap X \in \Borel\}$
    is a $\sigma$-algebra which contains all
    open subsets of $\Xhat$ and hence $\Borelh$, from which it follows that
    $\Borel' \subset \Borel$.
    Thus \eqref{eq-Borel} holds.

    Since $E \subset \Xhat$ has zero outer measure if and only
    if $E \cap X$ has zero measure, it follows that
    $\mu$ is a complete Borel regular measure on $\Xhat$ with the
    $\sigma$-algebra $\Meash$.
\end{proof}

Recall from the introduction that for
an open set $\Om\subset X$, 
\[
\Omhat=\Xhat \setm \itoverline{X \setm \Om},
\]
with the closure taken in $\Xhat$, is
the largest open set in $\Xhat$ such that $\Om = \Omhat \cap X$.
Note that $X^\wedge=\Xhat$.
We denote balls with respect to $\Xhat$ by $\Bhat$ or 
$\Bhat(x,r)=\{y\in\Xhat: d(x,y)<r\}$,
and balls with respect to $X$ by $B$, as before.
Note that we do not assume
any general connection between $B$ and $\Bhat$,
and in particular they may have different centres and radii.
The inclusion $\Bhat(x,r)\subset B(x,r)^\wedge$ can be strict, 
but the difference of the two sets is always of measure zero.

Much of analysis on metric spaces has been done assuming
global doubling and global Poincar\'e inequalities.
Here, we study properties that hold under (semi)local assumptions.

\begin{deff} \label{def-local-doubl-mu}
We say that $\mu$ is \emph{locally doubling} (on $X$) if 
for every $x_0 \in X$ there is $r_0>0$ 
(depending on $x_0$) such that $\mu$ is doubling within $B(x_0,r_0)$
in the sense of Definition~\ref{def-local-intro}.

If $\mu$ is doubling within every ball $B(x_0,r_0)$ then 
it is \emph{semilocally doubling} (on $X$),
and if moreover the doubling constant within $B(x_0,r_0)$ 
is independent of $x_0$ and $r_0$,
then $\mu$ is \emph{globally doubling} (on $X$).
\end{deff}

See Heinonen~\cite{heinonen} for more on doubling measures.
If $\mu$ is locally doubling on $X$ and $\Om \subset X$ is open,
then $\mu$  is also locally doubling on $\Om$.
A similar restriction property fails
for semilocal and global doubling, see \cite[Example~4.3]{BBsemilocal}. 

\begin{prop} \label{prop-X-Xhat-doubl}
The measure $\mu$ on $X$ is doubling  within 
$B(x_0,r_0)$ in the sense of Definition~\ref{def-local-intro}
if and only if 
its zero extension to 
$\Xhat$ is doubling within 
$\Bhat(x_0,r_0)$, with the same doubling constant $C_0$.
\end{prop}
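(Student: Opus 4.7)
The plan rests on the simple observation that $\mu(\Xhat \setm X) = 0$, so that $\mu(\Bhat(x,s)) = \mu(B(x,s))$ for every ball $\Bhat(x,s)$ in $\Xhat$ centered at $x \in X$. Combined with the density of $X$ in $\Xhat$ and continuity of measure from below, this should be enough to transfer the doubling estimate in both directions.

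For the forward direction I fix a ball $\Bhat(\xhat,r) \subset \Bhat(x_0,r_0)$ and pick $x_n \in X$ with $x_n \to \xhat$. For $r_1 \in (0,r)$ and $n$ so large that $d(x_n,\xhat) < r-r_1$, the triangle inequality yields
\[
\Bhat(x_n,r_1) \subset \Bhat(\xhat,r) \subset \Bhat(x_0,r_0),
\]
so intersecting with $X$ gives $B(x_n,r_1) \subset B(x_0,r_0)$. Applying the hypothesis and lifting the measures back to $\Xhat$ via the central observation produces $\mu(\Bhat(x_n,2r_1)) \le C_0\, \mu(\Bhat(x_n,r_1))$. Sandwiching by $\Bhat(\xhat,3r_1-r) \subset \Bhat(x_n,2r_1)$ and $\Bhat(x_n,r_1) \subset \Bhat(\xhat,r)$ (valid once $r_1 > r/3$) then yields $\mu(\Bhat(\xhat,3r_1-r)) \le C_0\, \mu(\Bhat(\xhat,r))$, and letting $r_1 \to r^-$ concludes by continuity of measure from below on the nested family $\Bhat(\xhat,3r_1-r) \nearrow \Bhat(\xhat,2r)$.

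For the reverse direction I would take $B(x,r) \subset B(x_0,r_0)$ in $X$. Since $x \in B(x_0,r_0)$, the number $\rho := r_0 - d(x,x_0)$ is strictly positive, and for $r' < \rho$ the triangle inequality gives $\Bhat(x,r') \subset \Bhat(x_0,r_0)$; the hypothesis then yields $\mu(B(x,2r')) \le C_0\, \mu(B(x,r'))$, which for $r \le \rho$ closes the argument by continuity as $r' \to r^-$. The main obstacle is the bad case $r > \rho$, which genuinely occurs: e.g.\ in $X=(0,1) \subset \R$ with $x_0 = \tfrac12$, $r_0 = \tfrac12$, $x = \tfrac35$, $r = \tfrac12$ one has $B(x,r) = (\tfrac{1}{10},1) \subset (0,1) = B(x_0,r_0)$ in $X$ but $\Bhat(x,r) = (\tfrac{1}{10},1]$ is \emph{not} contained in $\Bhat(x_0,r_0) = (0,1)$ because of the new boundary point at $1$. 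The redeeming feature is that the set-theoretic inclusion fails only on a subset of the $\mu$-null set $\Xhat \setm X$, so the remedy is to approximate $\Bhat(x,r)$ from inside by $\Xhat$-balls centered at points of $X$ lying strictly inside $\Bhat(x_0,r_0)$, mirroring the forward-direction trick, and pass to the limit by continuity of measure. Carrying this through while keeping exactly the same constant $C_0$ --- rather than a slightly worse one --- is what I expect to be the heart of the argument.
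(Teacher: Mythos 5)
Your forward direction (that doubling within $B(x_0,r_0)$ on $X$ yields doubling within $\Bhat(x_0,r_0)$ on $\Xhat$) is, up to the parametrization $r_1=r-\eps$, the same argument the paper gives for what it calls ``necessity'': approximate the centre $\xhat$ by $x_\eps\in X$, shrink the radius so that the $X$-ball sits inside $B(x_0,r_0)$, apply the hypothesis, and let the auxiliary radius tend to $r$ using continuity of measure from below. This part is correct.

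For the converse you put your finger on a real subtlety. The paper's one-line justification — that this ``follows directly from $\mu(B(x,r))=\mu(\Bhat(x,r))$'' — tacitly presumes that $B(x,r)\subset B(x_0,r_0)$ forces $\Bhat(x,r)\subset\Bhat(x_0,r_0)$, so that the $\Xhat$-doubling hypothesis applies to the ball $\Bhat(x,r)$. Your slit example ($X=(0,1)$, $x_0=\tfrac12$, $r_0=\tfrac12$, $x=\tfrac35$, $r=\tfrac12$, where $\Bhat(x,r)=(\tfrac1{10},1]$ escapes $\Bhat(x_0,r_0)=(0,1)$ at the completed boundary point) shows that this presumption can fail, even though the overshoot lies in the $\mu$-null set $\Xhat\setminus X$. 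So the paper's proof of this implication, read literally, has a gap, and you were right to flag it. What is missing from your proposal, though, is an actual repair: you correctly observe that shrinking the radius alone fails whenever $r>r_0-d(x_0,x)$, and you then gesture at approximating $\Bhat(x,r)$ from inside by $\Xhat$-balls centred at points of $X$, but you do not exhibit such balls that simultaneously sit inside $\Bhat(x_0,r_0)$, exhaust $\Bhat(x,r)$ in measure, and have doubles recovering $\mu(\Bhat(x,2r))$ in the limit. In a general metric space there need not be points ``between'' $x$ and $x_0$ to shift the centre towards, so these requirements are not obviously compatible, and it is exactly here — preserving the constant $C_0$ rather than losing a factor — that you (correctly) anticipate the difficulty. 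As it stands, this half of your argument is a plan rather than a proof; it either needs to be completed, or the discrepancy with the paper's ``follows directly'' should be raised with the authors.
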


For a corresponding result with global assumptions see
Aikawa--Shanmugalingam~\cite[Proposition~7.1]{AikSh05} and
Hei\-no\-nen--Kos\-ke\-la--Shan\-mu\-ga\-lin\-gam--Ty\-son~\cite[Lemma~8.2.3]{HKSTbook}.

\begin{proof}
The sufficiency follows directly from the fact that 
$\mu(B(x,r))=\mu(\Bhat(x,r))$ for all $x\in X$ and $r>0$.

For the necessity, let $\Bhat(\xhat,r)\subset\Bhat(x_0,r_0)$
and $0<\eps<\tfrac{1}{2}r$ be arbitrary. 
Find $x_\eps\in X$ such that $d(x_\eps,\xhat)<\eps$.
Then 
\[
\mu(\Bhat(\xhat,2r-3\eps)) \le \mu(B(x_\eps,2(r-\eps))) 
   \le C_0 \mu(B(x_\eps,r-\eps)) \le C_0 \mu(\Bhat(\xhat,r)),
\]
since $B(x_\eps,r-\eps)\subset B(x_0,r_0)$. 
Letting $\eps\to0$ in the left-hand side shows that
$\mu(\Bhat(\xhat,2r)) \le C_{0} \mu(\Bhat(\xhat,r))$.
\end{proof}

\begin{cor}  \label{cor-semilocal-doubling}
The measure $\mu$ is semilocally doubling on $X$ if and only if
it is semilocally doubling on $\Xhat$.
\end{cor}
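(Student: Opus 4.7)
The plan is to leverage Proposition~\ref{prop-X-Xhat-doubl} as a bridge between doubling on balls in $X$ and on their counterparts in $\Xhat$, and then handle the two directions of the equivalence separately, with the density of $X$ in $\Xhat$ taking care of centres that do not lie in $X$.

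For the ``if'' direction (assuming semilocal doubling on $\Xhat$) the argument is immediate: for every $x_0 \in X$ and every $r_0 > 0$, the point $x_0$ also lies in $\Xhat$, so by hypothesis $\mu$ is doubling within $\Bhat(x_0,r_0)$, and Proposition~\ref{prop-X-Xhat-doubl} then gives that $\mu$ is doubling within $B(x_0,r_0) \subset X$. No additional ideas are needed here.

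For the ``only if'' direction, the obstacle is that an arbitrary ball $\Bhat(\xhat_0,r_0)$ in $\Xhat$ may be centred at a point $\xhat_0 \in \Xhat\setm X$, so Proposition~\ref{prop-X-Xhat-doubl} cannot be applied directly. To circumvent this, I would use the fact that $X$ is dense in $\Xhat$: given $\eps>0$ pick $x_\eps \in X$ with $d(x_\eps,\xhat_0)<\eps$, so that $\Bhat(\xhat_0,r_0) \subset \Bhat(x_\eps,r_0+\eps)$. By the assumed semilocal doubling on $X$, $\mu$ is doubling within $B(x_\eps,r_0+\eps)$, and Proposition~\ref{prop-X-Xhat-doubl} promotes this to doubling within $\Bhat(x_\eps,r_0+\eps)$ with the same constant. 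Every ball contained in $\Bhat(\xhat_0,r_0)$ is then contained in $\Bhat(x_\eps,r_0+\eps)$ and therefore inherits the doubling property, proving doubling within $\Bhat(\xhat_0,r_0)$ as desired.

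The only subtlety I anticipate is being careful about the fact that, in the noncomplete/disconnected setting, ``doubling within a ball'' is phrased in terms of \emph{all sub-balls}, so enlarging the ambient ball automatically enlarges the admissible family and preserves the doubling constant; this is what allows the $\eps \to 0$ reduction without any quantitative control. Once this point is checked, the argument is essentially a two-line reduction to Proposition~\ref{prop-X-Xhat-doubl}.
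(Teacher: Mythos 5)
Your proof is correct and is essentially the argument the paper leaves implicit when deriving Corollary~\ref{cor-semilocal-doubling} from Proposition~\ref{prop-X-Xhat-doubl}: the ``if'' direction is immediate, and the ``only if'' direction uses density of $X$ in $\Xhat$ to recentre at a nearby $x_\eps\in X$ and then transfers doubling via the proposition and the inclusion $\Bhat(\xhat_0,r_0)\subset\Bhat(x_\eps,r_0+\eps)$. One very minor quibble: your closing remark about an ``$\eps\to0$ reduction'' is superfluous, since a single fixed $\eps>0$ already completes the argument (no limit is taken, in contrast to the proof of Proposition~\ref{prop-X-Xhat-doubl} itself).
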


\begin{deff} \label{def-PI}
Let $1 \le q < \infty$.
We say that the 
\emph{$(q,p)$-Poincar\'e inequality holds within $B(x_0,r_0)$} 
if there are constants $C>0$ and $\lambda \ge 1$ (depending on $x_0$ and $r_0$)
such that for all balls $B\subset B(x_0,r_0)$, 
all integrable functions $u$ on $\la B$, and all upper gradients $g$ of $u$, 
\begin{equation} \label{eq-PI}
        \biggl(\vint_{B} |u-u_B|^q \,\dmu\biggr)^{1/q}
        \le C r_B \biggl( \vint_{\lambda B} g^{p} \,\dmu \biggr)^{1/p}.
\end{equation}
We also say that $X$ (or $\mu$) supports 
a \emph{local $(q,p)$-Poincar\'e inequality} (on $X$) if
for every $x_0 \in X$ there is $r_0$ (depending on $x_0$) 
such that the $(q,p)$-Poincar\'e inequality holds within $B(x_0,r_0)$.

If the $(q,p)$-Poincar\'e inequality holds within every ball $B(x_0,r_0)$
then $X$ supports a \emph{semilocal $(q,p)$-Poincar\'e inequality},
and if moreover $C$ and $\la$ are independent of $x_0$ and $r_0$,
then $X$ supports a \emph{global $(q,p)$-Poincar\'e inequality}.

If $q=1$ we usually just write 
\emph{\p-Poincar\'e inequality}.
\end{deff}

The Poincar\'e inequality \eqref{eq-PI} 
can equivalently be required to hold
  for all measurable $u$ on $\la B$ 
and all \p-weak upper gradients $g$ of $u$,
  where the left-hand side is interpreted as $\infty$ if $u_B$ is not
  defined.
  This follows from the proof of Proposition~4.13 in \cite{BBbook}.
  However, the use of the dominated convergence at the end of that proof should
  perhaps be explained more carefully by replacing the last inequality therein by
   \begin{align*}
     \infty    
     & =\biggl( \vint_{B} |u-u_B|^q \,d\mu \biggr)^{1/q} 
     =\lim_{j \to \infty}\biggl( \vint_{B} \min\{j,|u-u_B|^q\} \,d\mu \biggr)^{1/q} \\
     &= \lim_{j \to \infty}\lim_{k \to \infty} \biggl( \vint_{B}
           \min\{j,|u_k-(u_k)_B|^q\} \,d\mu \biggr)^{1/q} 
       \le C \diam (B) \biggl( \vint_{\lambda B} g^{p} \,d\mu \biggr)^{1/p}.
   \end{align*}
   Alternatively Fatou's lemma can be used.

As in the case of the doubling condition, 
local Poincar\'e inequalities are inherited by open subsets, i.e.\ 
if $\Om \subset X$ is open and
$X$ supports a local $(q,p)$-Poincar\'e inequality,
then so does $\Om$.
This fails for semilocal and global Poincar\'e inequalities,
see \cite[Example~4.3]{BBsemilocal}.

\begin{prop}   \label{prop-X-Xhat-PI}
If $X$ supports a 
$(q,p)$-Poincar\'e inequality within $B(x_0,r_0)$ in the sense of 
Definition~\ref{def-PI}, with constants $\CPI$ and $\la$.
Then $\Xhat$ supports a $(q,p)$-Poincar\'e inequality 
within $\Bhat(x_0,r_0)$, with the same constants.
\end{prop}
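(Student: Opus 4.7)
The plan is to mimic the approximation idea used in the doubling proof (Proposition~3.2), by exhausting any ball $\Bhat\subset\Bhat(x_0,r_0)$ with balls whose centres lie in $X$ and then taking a limit in the Poincar\'e inequality that already holds on $X$ within $B(x_0,r_0)$.

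Fix $\Bhat=\Bhat(\xhat,r)\subset\Bhat(x_0,r_0)$, an integrable function $u$ on $\lambda\Bhat$ and an upper gradient $g$ of $u$. We may assume $g\in L^p(\lambda\Bhat)$, as otherwise the right-hand side of the desired inequality is $\infty$. For $0<\eps<r/(\lambda+1)$, pick $x_\eps\in X$ with $d(x_\eps,\xhat)<\eps$, and set $B_\eps=B(x_\eps,r-\eps)$. A straightforward triangle-inequality check shows $B_\eps\subset\Bhat$ and $\lambda B_\eps\subset\lambda\Bhat$; moreover, since any $y\in B_\eps$ satisfies $d(y,\xhat)<r$ and $\Bhat\subset\Bhat(x_0,r_0)$, we get $y\in B(x_0,r_0)$. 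Thus $B_\eps\subset B(x_0,r_0)$, and the assumed $(q,p)$-Poincar\'e inequality within $B(x_0,r_0)$ applies to $u$ and to $g|_{\lambda B_\eps}$ (which is an upper gradient of $u|_{\lambda B_\eps}$, because every curve in $\lambda B_\eps\subset X\subset\Xhat$ is a curve in $\lambda\Bhat$). This yields
\[
 \biggl(\vint_{B_\eps}|u-u_{B_\eps}|^q\,d\mu\biggr)^{1/q}
  \le \CPI (r-\eps)\biggl(\vint_{\lambda B_\eps}g^p\,d\mu\biggr)^{1/p}.
\]

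Next I pass to the limit $\eps\to0$. A triangle-inequality argument shows that for each $y\in \Bhat\cap X$ (resp.\ $y\in\lambda\Bhat\cap X$) one has $y\in B_\eps$ (resp.\ $y\in\lambda B_\eps$) as soon as $\eps$ is small enough, while $y\notin B_\eps$ whenever $d(y,\xhat)\ge r$. Hence $\chi_{B_\eps}\to\chi_{\Bhat\cap X}$ and $\chi_{\lambda B_\eps}\to\chi_{\lambda\Bhat\cap X}$ pointwise in $X$, both dominated by $\chi_{\lambda\Bhat}$, which has finite measure since $\lambda\Bhat\cap X\subset B(x_0,r_0)$. Since $\mu(\Xhat\setm X)=0$, dominated convergence gives $\mu(B_\eps)\to\mu(\Bhat)$, $u_{B_\eps}\to u_{\Bhat}$, and $\int_{\lambda B_\eps}g^p\,d\mu\to\int_{\lambda\Bhat}g^p\,d\mu$. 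So the right-hand side above converges to $\CPI r\bigl(\vint_{\lambda\Bhat}g^p\,d\mu\bigr)^{1/p}$.

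The main (mild) obstacle is the left-hand side, because a priori $|u-u_{\Bhat}|^q$ need not be integrable on $\Bhat$, so dominated convergence does not apply directly. I circumvent this exactly as suggested in the remark following Definition~\ref{def-PI}: for each fixed $j\in\mathbb N$,
\[
 \biggl(\vint_{B_\eps}\min\{j,|u-u_{B_\eps}|^q\}\,d\mu\biggr)^{1/q}
  \le \biggl(\vint_{B_\eps}|u-u_{B_\eps}|^q\,d\mu\biggr)^{1/q}
  \le \CPI(r-\eps)\biggl(\vint_{\lambda B_\eps}g^p\,d\mu\biggr)^{1/p}.
\]
For the truncated integrand the boundedness provides a dominating function, so letting $\eps\to0$ and then $j\to\infty$ (using monotone convergence for the latter) yields the desired inequality on $\Bhat$, with the same constants $\CPI$ and $\lambda$. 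The constants are preserved because at every stage we used only the Poincar\'e inequality within $B(x_0,r_0)$ with its original constants, and the factor $(r-\eps)$ passes cleanly to $r$ in the limit.
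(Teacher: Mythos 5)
Your proof is correct and follows essentially the same strategy as the paper: approximate $\Bhat$ from inside by balls $B_\eps=B(x_\eps,r-\eps)$ with centres $x_\eps\in X$ near $\xhat$, apply the $(q,p)$-Poincar\'e inequality on $X$ within $B(x_0,r_0)$, and pass to the limit as $\eps\to0$. The only difference is in how integrability of the left-hand side is handled: the paper first reduces to bounded $u$ (citing the proof of Proposition~4.13 in the book reference) and then uses dominated convergence, whereas you keep $u$ general and work with the truncations $\min\{j,|u-u_{B_\eps}|^q\}$ --- the very device the paper itself advocates in the remark following Definition~\ref{def-PI} --- which is, if anything, a cleaner and more self-contained route.
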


\begin{proof}
Let $\Bhat=\Bhat(\xhat,r)\subset\Bhat(x_0,r_0)$
and $0<\eps<\tfrac{1}{2}r$ be arbitrary. 
Let $u$ be integrable on $\la \Bhat$ and let $\ghat$ be
an upper gradient of $u$ with respect to $\Xhat$.
Then $\ghat|_X$ is an upper gradient of $u$ also with respect to $X$.
By the proof of 
Proposition~4.13 in \cite{BBbook}, we can assume that $u$ is bounded.
Find $x_\eps\in X$ such that $d(x_\eps,\xhat)<\eps$ and let 
$B_\eps:=B(x_\eps,r-\eps)$.
Then 
\[
\Bhat(\xhat,r-2\eps)\cap X\subset B_\eps \subset \Bhat
\quad \text{and}  \quad
\Bhat(\xhat,\la(r-2\eps))\cap X\subset \la B_\eps \subset \la\Bhat,
\]
which implies that
\[
\mu(B_\eps)\to\mu(\Bhat), \quad \mu(\la B_\eps)\to\mu(\la\Bhat) 
 \quad \text{and}  \quad
u_{B_\eps}\to u_{\Bhat}, \quad \text{as }\eps\to0.
\]
Since $B_\eps\subset B(x_0,r_0)$, the $(q,p)$-Poincar\'e inequality on $X$
implies that
\begin{align*}
\biggl(\vint_{B_\eps} |u- u_{B_\eps}|^q \, d\mu \biggr)^{1/q}
   & \le \CPI r \biggl(\vint_{\la B_\eps} \ghat|_X^p \,d\mu\biggr)^{1/p} 
    \le \CPI r \biggl(\frac{\mu(\la\Bhat)}{\mu(\la B_\eps)} 
               \vint_{\la \Bhat} \ghat^p \,d\mu\biggr)^{1/p}
\end{align*}
and letting $\eps\to0$ concludes the proof, by dominated convergence.
\end{proof}

\begin{cor} \label{cor-Xhat-PI}
If $X$ supports a semilocal $(q,p)$-Poincar\'e inequality,
then so does~$\Xhat$.
\end{cor}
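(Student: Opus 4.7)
The plan is to reduce the corollary to the already-established Proposition~\ref{prop-X-Xhat-PI} via an approximation argument. The key observation is that if the $(q,p)$-Poincar\'e inequality holds within some ball $\Bhat$ with certain constants $\CPI$ and $\la$, then it automatically holds within every ball contained in $\Bhat$, with the same constants. Proposition~\ref{prop-X-Xhat-PI} directly supplies the inequality on balls $\Bhat(x_0,r_0)$ centred at points of $X$; thus the only remaining task is to cover balls of $\Xhat$ whose centre may lie in $\Xhat\setm X$.

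Concretely, I would fix $\xhat_0\in\Xhat$ and $r_0>0$. Since $X$ is dense in $\Xhat$, I would choose $x_0\in X$ with $d(x_0,\xhat_0)<r_0/2$ and set $R=3r_0/2$. By the triangle inequality,
\[
\Bhat(\xhat_0,r_0)\subset\Bhat(x_0,R).
\]
The semilocal $(q,p)$-Poincar\'e inequality on $X$ then provides constants $\CPI>0$ and $\la\ge 1$ (depending on $x_0$ and $R$, hence ultimately on $\xhat_0$ and $r_0$) such that the inequality holds within $B(x_0,R)$ on $X$. Proposition~\ref{prop-X-Xhat-PI} now transfers it to $\Bhat(x_0,R)$ with exactly the same constants.

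Since every ball $\Bhat\subset\Bhat(\xhat_0,r_0)$ is also contained in $\Bhat(x_0,R)$, the $(q,p)$-Poincar\'e inequality is valid on every such $\Bhat$ with constants $\CPI$ and $\la$, which is precisely the statement that it holds within $\Bhat(\xhat_0,r_0)$. As $\xhat_0$ and $r_0$ were arbitrary, this establishes the semilocal $(q,p)$-Poincar\'e inequality on $\Xhat$. There is no real obstacle; the only subtlety is the possibility that $\xhat_0\in\Xhat\setm X$, which is exactly what makes the preliminary approximation by $x_0\in X$ necessary before Proposition~\ref{prop-X-Xhat-PI} can be applied.
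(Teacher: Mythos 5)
Your argument is correct and fills in exactly the one step the paper leaves implicit: since $X$ is dense in $\Xhat$, any ball $\Bhat(\xhat_0,r_0)$ in $\Xhat$ is contained in a ball $\Bhat(x_0,R)$ centred at a point $x_0\in X$, and the inequality within the larger ball (obtained from the semilocal hypothesis on $X$ via Proposition~\ref{prop-X-Xhat-PI}) immediately gives it within the smaller one with the same constants. This matches the intended deduction in the paper.
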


There is no equivalence in Proposition~\ref{prop-X-Xhat-PI} or
Corollary~\ref{cor-Xhat-PI}, as is easily
seen by considering $X=\R \setm \Q$.
For corresponding results with global assumptions see
Aikawa--Shanmugalingam~\cite[Proposition~7.1]{AikSh05} and
Hei\-no\-nen--Kos\-ke\-la--Shan\-mu\-ga\-lin\-gam--Ty\-son~\cite[Lemma~8.2.3]{HKSTbook}.

Note that, in spite of Propositions~\ref{prop-X-Xhat-doubl}
and~\ref{prop-X-Xhat-PI},
neither local doubling nor local Poincar\'e inequalities extend to $\Xhat$.
Indeed, the Lebesgue measure on any open set $X\subset\R^n$ is locally 
doubling and supports a local 1-Poincar\'e inequality,
whereas for the completion $\Xhat\subset\R^n$ these properties 
hold only in special cases.
(A typical example where the local doubling property fails is the closed
outer cusp of exponential type, while Poincar\'e inequalities usually
fail on disconnected (or essentially disconnected) sets, such as  
the bow-tie in \cite[Example~A.23]{BBbook}.
See also \cite[Example~4.3]{BBsemilocal}.)

In fact, for the completion $\Xhat$, the local and semilocal
properties are essentially equivalent.
Indeed, this follows from the following proposition.

\begin{prop} \label{prop-semilocal-doubling-mu-X}
{\rm (\cite[Proposition~1.2 and Theorem~4.4]{BBsemilocal})}
If $X$ is proper then $\mu$  is locally doubling if and only if it
is semilocally doubling.

If $X$ is, in addition, connected then also the local and 
semilocal $(q,p)$-Poincar\'e inequalities are equivalent.
\end{prop}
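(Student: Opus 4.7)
The plan is to prove both equivalences by exploiting compactness of closed balls (from properness) to turn the pointwise local constants into constants uniform over a given ball $B(x_0,r_0)$. The reverse directions (semilocal $\Rightarrow$ local) are immediate from the definitions, so the content lies entirely in the forward directions.

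For the doubling part, fix $B_0=B(x_0,r_0)$. Since $\itoverline{B_0}$ is compact, local doubling at each of its points yields a finite cover $\{B(z_i,\rho_i/10)\}_{i=1}^N$ of $\itoverline{B_0}$ such that $\mu$ is doubling within each $B(z_i,\rho_i)$ with constant $C_i$; set $\rho_\ast=\min_i\rho_i$ and $C_\ast=\max_i C_i$. Any ball $B=B(y,r)\subset B_0$ with $r\le\rho_\ast/4$ satisfies $2B\subset B(z_i,\rho_i)$ for some $i$, giving $\mu(2B)\le C_\ast\mu(B)$. A ball $B$ of larger radius is handled by comparing $\mu(2B)$ with $\mu(B(y,\rho_\ast/4))$ through finitely many iterations of small-scale doubling: cover $2B$ by boundedly many balls of radius $\rho_\ast/2$ (the number controlled by $r_0/\rho_\ast$, $C_\ast$ and $N$), and bound $\mu(B)$ from below by $\mu(B(y,\rho_\ast/4))$ using iterated small-scale doubling along a chain of intermediate balls, each lying in some $B(z_i,\rho_i)$ by virtue of the finite cover. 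The number of iterations is bounded in terms of $r_0/\rho_\ast$, yielding a doubling constant on $B_0$ of the form $C_\ast^{O(1)}$.

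For the Poincar\'e part, the same finite cover reduces matters to the case where $B\subset B_0$ is large compared to the local scale $\rho_\ast$, since sub-balls of radius $\le\rho_\ast/(4\la)$ lie together with their $\la$-dilations inside some $B(z_i,\rho_i)$ and inherit the local inequality with uniform constants. For a larger sub-ball $B$, one applies a Haj\l asz--Koskela-type chain argument: each pair of points $y_1,y_2\in B$ is joined by a telescoping chain of small balls $B(w_j,\sigma_j)\subset\la B$ with $\sigma_j\sim d(w_j,\{y_1,y_2\})$, in each of which the local $(q,p)$-Poincar\'e inequality holds with constants bounded uniformly in $B_0$. Summing the resulting oscillation estimates along the chain, and using the already established semilocal doubling, produces the $(q,p)$-Poincar\'e inequality on $B$ with constants depending only on $B_0$. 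Connectedness of $X$ enters precisely here: together with properness, local doubling and local PI, it provides quasiconvexity at small scales, and the compactness cover promotes this to uniform quasiconvexity on $\itoverline{B_0}$, which is what guarantees the existence of the required chains.

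The main obstacle is the large-ball case of the Poincar\'e inequality: one must produce chains of controlled length, dilation factor $\la$ and measure lower bounds uniform in $B\subset B_0$. This is where connectedness is genuinely needed and where the real work of \cite{BBsemilocal} is done; without it, two points of $B$ may lie in different quasi-components admitting no admissible chain, as illustrated by bow-tie--type examples cited in the text. The doubling case, by contrast, requires only the compactness/iteration scheme and goes through without connectedness.
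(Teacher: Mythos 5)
The paper does not prove this proposition; it is quoted from \cite[Proposition~1.2 and Theorem~4.4]{BBsemilocal}, so there is no in-text proof to compare against. On its own merits, your overall strategy (a finite cover of $\itoverline{B}_0$ coming from compactness, splitting into small and large radii) is the right one, and you correctly identify connectedness as the place where the Poincar\'e part genuinely differs from the doubling part.

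There is, however, a real gap in the argument for the doubling half. For a large ball $B=B(y,r)\subset B_0$ you propose to control each covering ball $B'_j$ of radius $\rho_\ast/2$ by ``iterated small-scale doubling along a chain of intermediate balls'' down to $B(y,\rho_\ast/4)$. In a disconnected proper space no such chain need exist: take $X=\{0\}\cup\{1\}$ with point masses $1$ and $M$, which is proper and locally doubling, yet $\{0\}$ and $\{1\}$ cannot be joined by any chain of overlapping small balls. This is precisely the situation where chaining is impossible, and yet the proposition still asserts semilocal doubling. So the mechanism you give is exactly the one that would need connectedness, contradicting your own (correct) remark that the doubling case goes through without it. The fix is to drop chaining entirely for $r\ge\rho_\ast$: the constraint $B(y,r)\subset B_0$ forces $2B$ to lie in a fixed ball $B(x_0,R)$ with $R$ depending only on $B_0$ (if $r>2r_0$, no point of $X$ has distance in $[2r_0,r)$ from $y$, so $r$ is capped by the distance from $y$ to $X\setm B(y,2r_0)$, which is bounded over $\itoverline{B}_0$; if $X\subset B(x_0,3r_0)$ simply take $\mu(X)$), giving $\mu(2B)\le\mu(B(x_0,R))<\infty$, while $\mu(B)\ge\mu(B(y,\rho_\ast/2))\ge\inf_{z\in\itoverline{B}_0}\mu(B(z,\rho_\ast/2))>0$ by lower semicontinuity of $z\mapsto\mu(B(z,\rho_\ast/2))$ and compactness of $\itoverline{B}_0$. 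No chain and no connectedness are needed; one only loses the quantitative bound $C_\ast^{O(1)}$, which the statement does not ask for. For the Poincar\'e part your sketch is morally correct---connectedness together with local doubling and local PI yields local quasiconvexity, and that is indeed what makes the Haj\l asz--Koskela chaining go through---but passing from small-scale quasiconvexity to uniform chains in $\itoverline{B}_0$ is nontrivial (note in particular that $B_0$ itself may be disconnected even when $X$ is connected, so chains must be allowed to leave $B_0$ into dilated balls), and as you acknowledge this is where the substance of the cited result lies.
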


The space $X$ is \emph{proper} if  all closed and bounded
sets are compact.
Properness always implies completeness,
and the following special case of \cite[Proposition~3.4]{BBsemilocal}
shows that the converse holds if $\mu$ is semilocally doubling. 
It  is also shown therein that the constant $\tfrac{2}{3}$ 
is sharp.

\begin{prop} \label{prop-tot-bdd-2/3}
If $\mu$ is doubling within $B(x_0,r_0)$
in the sense of Definition~\ref{def-local-intro}
then $B(x_0,\de r_0)$ 
is totally bounded for every $\de< \tfrac{2}{3}$. 

In particular, if $\mu$ is semilocally doubling 
then $X$ is proper if and only if it is complete.
\end{prop}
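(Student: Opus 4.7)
The plan is to reduce total boundedness of $B(x_0,\de r_0)$ to a counting argument for disjoint small balls with controlled measure. Fix $\eps>0$ (the case $\eps\ge\de r_0$ being trivial) and choose a maximal $\eps$-separated set $E\subset B(x_0,\de r_0)$; by maximality $E$ is an $\eps$-net for $B(x_0,\de r_0)$, so it suffices to show $|E|<\infty$. The balls $\{B(y,\eps/2)\}_{y\in E}$ are pairwise disjoint and all lie in $B(x_0,\de r_0+\eps/2)$, which has finite measure. Hence $|E|<\infty$ will follow once one has a constant $c>0$, depending only on $\de$, $r_0$, $\eps$ and the doubling constant, such that $\mu(B(y,\eps/2))\ge c$ for every $y\in B(x_0,\de r_0)$.

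To construct $c$, set $t=d(y,x_0)<\de r_0$ and observe that the triangle inequality gives $B(y,r_0-t)\subset B(x_0,r_0)$, so the doubling hypothesis applies to each of the nested balls $B(y,(r_0-t)/2^n)$, $n\ge 0$. Iterating doubling on these balls, stopping once the radius falls below $\eps/2$, yields
\[
  \mu(B(y,r_0-t))\le C^N\mu(B(y,\eps/2)),
\]
with $N\le\lceil\log_2(2r_0/\eps)\rceil$ independent of $y$. The crucial observation is that one further doubling step is still legitimate, since the base ball $B(y,r_0-t)$ lies in $B(x_0,r_0)$ even though $B(y,2(r_0-t))$ may not; hence
\[
  \mu(B(y,2(r_0-t)))\le C^{N+1}\mu(B(y,\eps/2)).
\]
A second use of the triangle inequality shows $B(y,2(r_0-t))\supset B(x_0,2r_0-3t)$, and because $t<\de r_0$ with $\de<\tfrac23$ we have $2r_0-3t>(2-3\de)r_0>0$. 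Combining these estimates,
\[
  \mu(B(y,\eps/2))\ge C^{-N-1}\mu(B(x_0,(2-3\de)r_0))>0,
\]
the right-hand side being the desired $y$-independent constant $c$.

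For the equivalence, properness always implies completeness: any Cauchy sequence is bounded, hence lies in some compact closed ball, so has a convergent subsequence and therefore converges. For the converse, assume $X$ is complete and $\mu$ semilocally doubling, and let $A\subset X$ be closed and bounded, so $A\subset\itoverline{B(x_0,s)}$ for some $x_0\in X$ and $s>0$. Choose $r_0>3s/2$, so that $s<\tfrac23 r_0$; semilocal doubling provides doubling within $B(x_0,r_0)$, and the first part shows that $B(x_0,s)$, and hence its closure, is totally bounded. This closure is also complete (as a closed subset of the complete space $X$), and a complete totally bounded metric space is compact; therefore $\itoverline{B(x_0,s)}$ is compact, and so is $A$.

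The main technical obstacle is attaining the sharp threshold $\de<\tfrac23$ rather than the naive $\de<\tfrac12$ that direct iteration strictly inside $B(x_0,r_0)$ would give. The key point is that the doubling inequality is valid whenever the base ball lies in $B(x_0,r_0)$, even when the doubled ball leaves it; that single extra doubling is precisely what upgrades the weak estimate $r_0-2t>0$ to the sharper $2r_0-3t>0$.
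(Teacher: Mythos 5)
Your proof is correct, and it is essentially the standard argument for deriving total boundedness from a doubling condition. The paper itself does not reproduce a proof here---it cites the result as a special case of Proposition~3.4 in the reference \cite{BBsemilocal}---but your reasoning is sound and self-contained: the chain of doubling steps along $B(y,(r_0-t)/2^n)\subset B(x_0,r_0)$, followed by one additional doubling of $B(y,r_0-t)$ (legitimate because Definition~\ref{def-local-intro} only requires the base ball, not the doubled ball, to lie in $B(x_0,r_0)$), gives $B(y,2(r_0-t))\supset B(x_0,(2-3\de)r_0)$ and the uniform lower bound $\mu(B(y,\eps/2))\ge C^{-M-1}\mu(B(x_0,(2-3\de)r_0))$ with $M=\lceil\log_2(2r_0/\eps)\rceil$, which is exactly what the sharp threshold $\de<\tfrac{2}{3}$ requires. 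One small imprecision in the write-up: the exponent $N$ you introduce does depend on $y$ through $t=d(y,x_0)$; what is $y$-independent is the upper bound $M\ge N$, and since $C\ge1$ it is $C^{-M-1}$ that furnishes the uniform constant $c$. The equivalence of properness and completeness under semilocal doubling is handled correctly.
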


Thus, under semilocal doubling, $\Xhat$ is always proper and a local
$(q,p)$-Poincar\'e inequality on $\Xhat$ implies a semilocal one,
whenever $\Xhat$ is connected.

\section{Extensions of Newtonian functions to
\texorpdfstring{$\Xhat$}{X}}

\label{sect-Xhat}

\emph{Recall that from now on it is required that $p>1$.}

\medskip

If a function $u:\Xhat \to \eR$ has a (\p-weak) upper gradient $g$ on $\Xhat$,
then clearly $g|_X$ is a (\p-weak) upper gradient of $u|_X$.
The converse is not true in general, as seen e.g.\ in
$X=\R\setm\Q\subset\R=\Xhat$,
but we will prove the following extension result.

\begin{thm} \label{thm-Xhat-semi}
Assume that the doubling property
and the \p-Poincar\'e inequality hold within the ball $B_0$ in the sense of
Definitions~\ref{def-local-intro}
and~\ref{def-PI}.
Let $\Om\subset B_0$ be open and $u \in \Dp(\Om)$.

Then there is 
$\uhat\in\Dp(\Omhat)$ such that $\uhat=u$ $\CpX$-q.e.\ in 
$\Om$ and the minimal \p-weak upper gradient
$g_{\uhat}$ of $\uhat$ with respect to $\Xhat$ satisfies
\begin{equation}   \label{eq-Xhat-local}
  g_{\uhat} \le A_{0} g_u \quad \text{a.e.\ in }\Om,
\end{equation}
where $A_{0}$ is a constant 
depending only on $p$,
the doubling 
constant and both constants in the \p-Poincar\'e inequality within $B_0$.

If $\mu$ is semilocally doubling and supports a semilocal \p-Poincar\'e 
inequality, 
then the conclusion of the theorem holds for 
all bounded open $\Om\subset X$.
Under global assumptions, the conclusion holds also for unbounded $\Om$
and $A_0$ depends only on 
$p$,
the global doubling 
constant and both constants in the global \p-Poincar\'e inequality.

Moreover, if $\Om$ is \p-path open in $\Xhat$ 
then we can, in the above conclusions, take
$\uhat=u$ in $\Om$ and $g_{\uhat}=g_u$ a.e.\ in $\Om$.
When $\mu$ is semilocally doubling 
and supports a semilocal \p-Poincar\'e inequality,
the extension result holds also for unbounded
open $\Om \subset X$, which are \p-path open in $\Xhat$. 
\end{thm}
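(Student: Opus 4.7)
The plan is to use Propositions~\ref{prop-X-Xhat-doubl} and~\ref{prop-X-Xhat-PI} to transfer the doubling property and \p-Poincar\'e inequality from $B_0\subset X$ to $\Bhat_0\subset \Xhat$, reducing matters to a standard Sobolev extension problem on a space where the usual machinery (Lebesgue differentiation, chain-of-balls telescoping, Hajlasz/maximal-function upper gradient estimates) is available. The extension itself is the Lebesgue-average construction from Theorem~\ref{thm-Xhat-intro},
\[
\uhat(x)=\limsup_{r\to 0}\vint_{\Bhat(x,r)\cap X} u\,d\mu, \quad x\in\Omhat.
\]
Note that since $\mu(\Omhat\setm\Om)=0$, averages over $\Bhat(x,r)$ and over $\Bhat(x,r)\cap X$ coincide, so $\uhat$ is the standard Lebesgue-average on $\Omhat$ viewed inside $\Xhat$.

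I would first handle the case $\Om\subset B_0$. On $\Om$, the doubling property and \p-Poincar\'e inequality within $B_0$ give a Lebesgue differentiation theorem for $u\in\Dp(\Om)$, so $\uhat=u$ at every \p-Lebesgue point, i.e.\ $\CpX$-q.e.\ in $\Om$. To obtain an upper gradient of $\uhat$ on $\Omhat$, extend $g_u$ by zero to $\Omhat\setm\Om$ and show that $A_{0}g_u$ is a \p-weak upper gradient of $\uhat$ in $\Omhat$. This follows from the classical telescoping estimate along a curve $\gamma$ in $\Omhat$: using balls $\Bhat_k$ of dyadic radii along $\gamma$ together with the \p-Poincar\'e inequality within $\Bhat_0$ (granted by Proposition~\ref{prop-X-Xhat-PI}), one deduces
\[
|\uhat(\gamma(0))-\uhat(\gamma(l_\gamma))| \le C\sum_k r_k \biggl(\vint_{\la \Bhat_k} g_u^p\,d\mu\biggr)^{1/p},
\]
and a maximal-function argument converts the right-hand side into $\int_\gamma A_0 g_u\,ds$ for \p-a.e.\ curve. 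Since the construction uses only $p$, the doubling constant and the PI constants within $B_0$, the resulting $A_0$ has the required form, and since $g_{\uhat}$ vanishes a.e.\ on $\Omhat\setm\Om$ (because any \p-weak upper gradient there can be taken to be zero after restriction to this measure-zero set), we obtain $g_{\uhat}\le A_0 g_u$ a.e.\ in $\Om$.

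The remaining cases reduce to this. If $\mu$ is semilocally doubling with a semilocal \p-Poincar\'e inequality and $\Om\subset X$ is bounded, pick any $x_0\in X$ and $r_0$ large enough that $\Om\subset B(x_0,r_0)$; the semilocal hypothesis gives doubling and PI within $B(x_0,r_0)$, so we apply the first case with $B_0=B(x_0,r_0)$. Under global assumptions the constants are uniform, so one may let $r_0\to\infty$ and cover unbounded $\Om$ as well. Finally, for \p-path open $\Om$ in $\Xhat$, \p-a.e.\ curve $\gamma$ in $\Xhat$ has $\gamma^{-1}(\Om)$ relatively open, so if we set $\uhat=u$ on $\Om$ (and define $\uhat$ on $\Omhat\setm\Om$ by the limsup as above), then along each open subarc of $\gamma$ lying in $\Om$ the function $g_u$ (extended by zero) satisfies the upper gradient inequality; the openness lets us pass to endpoints via continuity along such subarcs, and $\mu(\Omhat\setm\Om)=0$ takes care of the rest. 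Consequently $g_u$ itself is a \p-weak upper gradient of $\uhat$ on $\Omhat$, giving $g_{\uhat}\le g_u$ a.e.\ in $\Om$; the reverse inequality holds because $g_{\uhat}|_X$ is a \p-weak upper gradient of $u$ on $\Om$, so $g_{\uhat}=g_u$ a.e.\ in $\Om$.

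The main obstacle is Step 2: upgrading the telescoping Poincar\'e bound into a genuine \p-weak upper gradient estimate on $\Omhat$. Curves in $\Omhat$ may visit $\Omhat\setm\Om$ on sets of positive one-dimensional measure even though this set has $\mu$-measure zero, and one must argue that averages $\vint_{\la\Bhat_k}g_u^p\,d\mu$ (defined via the \p-PI on $\Xhat$) still give the correct control of $\uhat$ through such points. This is where the identification $\mu(\Bhat(x,r))=\mu(B(x,r)\cap X)$, combined with the equivalence of the Lebesgue-average formula for $\uhat$ and the quasicontinuous representative of $u$, has to be handled carefully in order to justify the factor $A_0$ depending only on the quantitative constants within $B_0$.
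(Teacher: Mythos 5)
Your overall architecture is different from the paper's, and your own closing paragraph correctly identifies where the difficulty lies -- so the gap is real. The paper does not try to prove directly that $A_0 g_u$ (extended by zero) is a $p$-weak upper gradient of the limsup-average $\uhat$. Instead it builds an explicit Whitney-type partition of unity $\{\phiik\}$ on $\Om$ (via Heikkinen--Koskela--Tuominen) and forms the discrete convolutions $u_k=\sum_i u_{\Bik}\phiik$. Since the $\phiik$ are Lipschitz, $u_k$ extends Lipschitzly to $\Omhat$; the pointwise Lipschitz constant of $\uhat_k$ is controlled by the Poincar\'e averages of $g_u$; and a weak $L^p$ compactness lemma produces the limit $\uhat\in\Np(\Omhat)$ together with a weak upper gradient $g$ which is then compared to $g_u$ at Lebesgue points of $g$ and $g_u^p$. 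The crucial merit of this route is that it completely sidesteps the problem you flagged: one never has to argue along individual curves in $\Omhat$ that may spend positive length in $\Omhat\setm\Om$, because the approximants are genuine Lipschitz functions on $\Omhat$ whose upper gradients can be read off from the pointwise dilation. Your telescoping/maximal-function plan yields, at best, a Haj\l asz-type pointwise inequality $|\uhat(x)-\uhat(y)|\le d(x,y)(h(x)+h(y))$ with $h\in L^p$; upgrading that to a Newtonian $p$-weak upper gradient on $\Omhat$ is nontrivial and, in the standard proofs, is done precisely by a discrete convolution of the kind the paper uses -- so the step you defer is essentially the whole content of the proof.

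Two further issues. First, invoking the existence of $\CpX$-q.e.\ $L^p$-Lebesgue points for $u\in\Dp(\Om)$ at the outset is circular: that statement is Proposition~\ref{prop-Leb-pt}, which the paper proves \emph{from} Theorem~\ref{thm-Xhat-semi}. The correct order is to first obtain $\uhat\in\Dp(\Omhat)$ with $\uhat=u$ a.e.\ in $\Om$, and then deduce q.e.\ equality because both functions lie in $\Dp(\Om)$. Second, "let $r_0\to\infty$" for unbounded $\Om$ under global assumptions glosses over a gluing step: one must build the extension on $\Omhat\cap\Bhat(x_0,k)$ for each $k$, and then check that successive extensions agree q.e.\ (which holds because they agree a.e.\ and belong to $\Dp$); the paper spells this out. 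Your treatment of the $p$-path open case is likewise sketchier than what is needed -- curves in $\Omhat$ can enter and leave $\Om$ infinitely often, and the paper avoids a direct curve-by-curve verification by citing the equality of zero sets for $\CpOm$ and $\CpXhat$ on $p$-path open sets, together with the equality of minimal weak upper gradients from Bj\"orn--Bj\"orn~\cite{BBnonopen}, Proposition~3.5.
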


A set $\Om\subset \Xhat$ is \p-\emph{path open}\/ in $\Xhat$ if for \p-almost
every curve $\ga:[0,l_\ga]\to\Xhat$, the set
$\ga^{-1}(\Om)$ is relatively open in $[0,l_\ga]$.
By Shanmugalingam~\cite[Remark~3.5]{Sh-harm}, $\Om$
is \p-path open in $\Xhat$ if it is quasiopen in $\Xhat$; 
see also Bj\"orn--Bj\"orn--Mal\'y~\cite{BBMaly} for the
converse implication under certain assumptions.
(The set $\Om$ is \emph{quasiopen} in $\Xhat$ if for every $\eps>0$ there
is an open set $G\subset \Xhat$ such that $\CpXhat(G)<\eps$ and $\Om \cup G$ is open.) 
Note that if $\mu$ is locally doubling,  then 
$X$ (and thus $\Om$) 
is open in $\Xhat$ if and only if it is locally compact. 

For locally compact $X$ with global assumptions, the extension result
$\uhat=u$ in $X$ with $g_\uhat=g_u$ a.e.\ in $X$  was for $u\in\Np(X)$
proved in Lemma~8.2.3 in 
Heinonen--Koskela--Shanmugalingam--Tyson~\cite{HKSTbook}.
A similar result in 
Aikawa--Shan\-mu\-ga\-lin\-gam~\cite[Proposition~7.1]{AikSh05}
relies (via Cheeger~\cite[Theorems~6.1 and~17.1]{Cheeg}) on Cheeger's results,
which assume that $X$ is complete.

\begin{remark}
By Proposition~\ref{prop-Leb-pt} below, $\uhat$ 
in Theorem~\ref{thm-Xhat-semi} may be defined by
\[ 
    \uhat(x)=\limsup_{r \to 0} \vint_{\Bhat(x,r) \cap \Om} u \, d\mu, \quad
x \in \Omhat.
\] 
\end{remark}

The simple example $X=\R \setm \{0\}$ with $u=\chi_{(0,\infty)}$ 
shows that the requirement $\Om\subset B_0$ in 
Theorem~\ref{thm-Xhat-semi} cannot be omitted.
It also demonstrates that in general, under local assumptions, functions
in $\Dp(\Om)$ may fail to have extensions even to $\Dp\loc(\Omhat)$.
A partial remedy for this situation is provided by Lemma~\ref{lem-ext-Nploc}
below for functions from $\Dploc(X)$.

The following example shows that it is essential to require
a Poincar\'e inequality on $X$ in Theorem~\ref{thm-Xhat-semi}.

\begin{example}   \label{ex-R-setm-Q}
Let $X =\R \setm \Q$ equipped with the Lebesgue measure $\mu$,
which is globally doubling on $X$.
As $X$ is totally disconnected, 
 $g_u = 0$ a.e.\ for every $u \in L^p(X)$ and hence $\Np(X)=L^p(X)$.
Thus no Poincar\'e inequality is supported on $X$,
and there is no extension result to $\Xhat$ similar to 
Theorem~\ref{thm-Xhat-semi}.
\end{example}

A natural question is whether the constant $A_0$ in Theorem~\ref{thm-Xhat-semi}
can be chosen equal to one when $\Om$ 
is not \p-path open in $\Xhat$.
Example~\ref{ex-R-setm-Q} shows that it can happen that 
$g_{u,X} =0 < g_{u,\Xhat}$ a.e., but such 
$X$ does not support any Poincar\'e inequality, even though $\mu$ is  
globally doubling on $X$.
On the other hand, the usage of
Proposition~3.5 from Bj\"orn--Bj\"orn~\cite{BBnonopen} 
at the end of the proof of Theorem~\ref{thm-Xhat-semi} shows that
$A_0=1$ also when $\Om$ is only \emph{\p-path almost open}
in $\Xhat$, i.e.\ when
for \p-almost every curve $\ga:[0,l_\ga]\to\subset\Xhat$, the set
$\ga^{-1}(\Om)$ is a union of a relatively open set in $[0,l_\ga]$ and 
a set of 1-dimensional Hausdorff measure zero.

Note, however, that this relaxed assumption is not enough to guarantee
that $\uhat$ can be chosen equal to $u$ everywhere in $\Om$.
This is because in \p-path almost open $\Om$, it can happen that 
there are much fewer zero sets for the capacity $\CpXhat$ 
than for the smaller capacity $\CpOm$.
For example, every $U\subset\R$ with zero 1-dimensional Hausdorff measure 
is \p-path almost open in $\R$ but, as it is totally disconnected, 
we see that $\Cp^U$ is trivial while $\Cp^\R(U)$ can be positive.

\begin{openprob}   \label{open-pr}
Under the assumptions in Theorem~\ref{thm-Xhat-semi}
(and without assuming that $\Om$ is \p-path almost
open in $\Xhat$)
can it happen that it is not true that $g_{\uhat}=g_u$ a.e.? 
\end{openprob}

\begin{proof}[Proof of Theorem~\ref{thm-Xhat-semi}]
In this proof, $C$ will denote various constants which 
only depend on the constants in the local assumptions, and
which may change even within the same line.
Assume to start with that $u$ is bounded.
Let $\eps_k$ be a sequence decreasing to 0 as $k\to\infty$. 
Lemmas~5.1 and~5.2 in Heikkinen--Koskela--Tuominen~\cite{HKT} 
(or a standard Whitney type construction)
provide us, for each $k$, with a cover $\{\Bik\}_{i}$ of $\Om$ by balls 
$\Bik$ of radii
$\rik\le\eps_k$ and a subordinate Lipschitz partition of unity 
$\{\phiik\}_{i}$
so that
\begin{itemize}
\setlength{\itemsep}{0pt}%
\setlength{\parskip}{0pt plus 1pt}%
\item $10\la\Bik\subset \Om$ for all $i$ and $k$;
\item each $10\la\Bik$ meets at most $M$ balls $10\la\Bjk$, and in that case
$\rjk\le2\rik$;
\item each $\phiik$ is $C/\rik$-Lipschitz and vanishes outside $2\Bik$;
\item $\sum_{i}\phiik=1$ in $\Om$.
\end{itemize}
Here $\la$ denotes the dilation constant in the local \p-Poincar\'e inequality 
within $B_0$.
Lemma~5.3 in~\cite{HKT} and its proof (note that $\Bjk\subset 10\Bik$ whenever
$2\Bjk\cap2\Bik\ne\emptyset$) then show that the functions
\[
u_k:= \sum_i u_{\Bik}\phiik
\]
satisfy $u_k\to u$ in $L^p(\Om)$, as $k\to\infty$, and moreover
\begin{alignat}{2}  \label{eq-lip-for-uk}
|u_k(x)-u_k(y)| & \le \frac{Cd(x,y)}{\rik} \vint_{10\Bik} |u-u_{10\Bik}|\,d\mu \\
&\le Cd(x,y) \biggl( \vint_{10\la\Bik} g_u^p\,d\mu \biggr)^{1/p}
&\quad& \text{for all }x,y\in\Bik. \nonumber
\end{alignat}
By passing to a subsequence, we can in addition assume that $u_k\to u$
a.e.\ in $\Om$.

Strictly speaking, $u_k$ are to start with only defined on $\Om$, but the
functions $\phiik$, being Lipschitz, extend uniquely to $\Omhat$
and thus, so do $u_k$.
Call these extensions $\uhat_k$.
Then~\eqref{eq-lip-for-uk} holds for $\uhat_k$ and all $x,y\in\Bhatik$ as well.
Let 
\[ 
 \Lip \uhat_k(x) := \limsup_{r\to0} \sup_{y\in \Bhat(x,r)} \frac{|\uhat_k(y)-\uhat_k(x)|}{r}
\] 
be the \emph{upper pointwise dilation} of $\uhat_k$ (also
called the local upper Lipschitz constant).
It follows from~\eqref{eq-lip-for-uk}
that the minimal \p-weak upper gradient $g_{\uhat_k}$ 
(with respect to $\Xhat$) 
satisfies
\begin{equation}    \label{eq-est-g-uhat-k}
g_{\uhat_k}(x) \le \Lip\uhat_k(x)
\le C \biggl( \vint_{10\la\Bik} g_u^p\,d\mu \biggr)^{1/p}
\quad \text{for a.e.\ $x\in\Bhatik$}, 
\end{equation}
see Proposition~1.14 in~\cite{BBbook}. 
Since $\mu(\Xhat\setm X)=0$ and the balls $10\la\Bik$ have bounded overlap, 
this implies that
\[ 
\int_{\Omhat} g_{\uhat_k}^p \,d\mu 
\le C \sum_i  
   \int_{\Bik} \biggl( \vint_{10\la\Bik} g_u^p\,d\mu \biggr) \,d\mu 
\le C \int_{\Om} g_u^p\,d\mu.
\]
We can therefore conclude from Lemma~6.2 in~\cite{BBbook}
that there is a subsequence of $\uhat_k$
(also denoted $\uhat_k$) converging weakly in 
$L^p(\Omhat)$ to some 
$\uhat\in L^p(\Omhat)$ and such that $g_{\uhat_k}\to g$ 
weakly in $L^p(\Omhat)$,
where $g\in L^p(\Omhat)$ is a \p-weak upper gradient 
(with respect to $\Xhat$) of $\uhat$.
Moreover, $\uhat\in\Np(\Omhat)$  and
$\uhat=u$ a.e.\ in $\Om$.
Hence also $\uhat=u$ $\CpX$-q.e.\ in $\Om$,
since $u,\uhat\in\Np(\Om)$.

The Lebesgue differentiation theorem holds in $B_0$, 
cf.\ \cite[Theorem~3.9]{BBsemilocal}.
Let $x\in \Om$ be a Lebesgue point of both $g$ and $g_u^p$.
Then for each $\eps>0$ there exists $\rho_0>0$ such that for every
$B=B(x,\rho)$ with $0<\rho<\rho_0$,
\[
|g(x)- g_{B}|<\eps \quad \text{and} \quad |g_u^p(x)- (g_u^p)_{B}|<\eps.
\]
We then have by~\eqref{eq-est-g-uhat-k} and the weak convergence of $g_{\uhat_k}$
that
\begin{align*}
g(x)-\eps &< \vint_{B} g\,d\mu \\
  & = \lim_{k\to\infty}  \vint_{B} g_{\uhat_k}\,d\mu \\
   &\le \liminf_{k\to\infty}  \frac{C}{\mu(B)} \sum_{\Bik\cap B\ne\emptyset}
 \mu(\Bik) \biggl( \vint_{10\la\Bik} g_u^p\,d\mu \biggr)^{1/p} \\
   &
  \le \liminf_{k\to\infty}  \frac{C}{\mu(B)} \sum_{\Bik\cap B\ne\emptyset}
 \mu(\Bik)^{1-1/p} \biggl( \int_{10\la\Bik} g_u^p\,d\mu \biggr)^{1/p}.
\end{align*}
The last expression 
can be estimated using the H\"older inequality and the bounded overlap
of the balls $10\la\Bik$.
We therefore conclude that
\begin{align*}
g(x)-\eps &< \liminf_{k\to\infty} \frac{C}{\mu(B)} 
\biggl( \sum_{\Bik\cap B\ne\emptyset} \int_{10\la\Bik} g_u^p\,d\mu \biggr)^{1/p}
\biggl( \sum_{\Bik\cap B\ne\emptyset} \mu(\Bik) \biggr)^{1-1/p} \\
&\le C \liminf_{k\to\infty} \biggl( \vint_{B(x,\rho+20\la\eps_k)} g_u^p
       \,d\mu \biggr)^{1/p}
\le C(g_u(x)+\eps).
\end{align*}
Letting $\eps\to0$ proves the first part of the theorem for bounded $u$
and $\Om$.
For unbounded $u$, use the truncations $\min\{k,\max\{u,-k\}\}$ of $u$ 
at $\pm k$.

If $\Om$ is unbounded and $\mu$ is globally doubling and supporting 
a global \p-Poincar\'e inequality,
then we apply the above arguments 
to the  
sets $\Om_k=\Om\cap B(x_0,k)$.
More precisely, by the above we can find
$\uhat_1 \in \Dp(\Omhat_1)$ such that $\uhat_1=u$  
$\CpX$-q.e.\ on $\Om_1$.
We can also find $\uhat_2 \in \Dp(\Omhat_2)$ such that $\uhat_2=u$ 
$\CpX$-q.e.\ in $\Om_2$.
As the set $\{y \in \Omhat_1 : \uhat_1(y) \ne \uhat_2(y)\}$
has measure zero, it must be of zero $\CpXhat$-capacity, and 
we are thus free to choose $\uhat_2=\uhat_1$ in $\Omhat_1$.
Proceeding in this way, we can construct $\uhat \in \Dp(\Omhat)$ 
so that $\uhat=u$ $\CpX$-q.e.\ on $X$.
Moreover, 
$g_{\uhat} \le A g_u$ a.e.\ in $\Om$,
where $A$ only depends on $p$, the global
doubling constant and the constants in the global \p-Poincar\'e inequality.

If $\Om$ is \p-path open in $\Xhat$ then the capacities 
$\CpOm$ and $\CpXhat$ have the same zero sets in $\Om$,
by Proposition~4.2 in Bj\"orn--Bj\"orn--Mal\'y~\cite{BBMaly}.
By Lemma~2.24 in \cite{BBbook} the zero sets are also
the same for $\CpOm$ and $\CpX$ for sets in $\Om$.
This shows that we may choose $\uhat=u$ in $\Om$.
That the  
minimal \p-weak upper gradients with respect to $X$ 
and $\Xhat$ are equal follows from
Proposition~3.5 in Bj\"orn--Bj\"orn~\cite{BBnonopen}.
In this case, the argument above for unbounded $\Om$ also  holds
under semilocal assumptions, since 
$A\equiv1$.
\end{proof}

The extension Theorem~\ref{thm-Xhat-semi} makes it possible to
obtain several qualitative results about Newtonian functions on
noncomplete spaces under local assumptions.
For this, it is even enough that the functions belong to the local spaces.
The following two lemmas will therefore be useful.

\begin{lem} \label{lem-Nploc=Dploc}
If $X$ supports a local $(p,p)$-Poincar\'e inequality
{\rm(}or if $\mu$ is locally doubling and supports a local \p-Poincar\'e 
inequality{\rm)} then $\Nploc(X)=\Dploc(X)$.
\end{lem}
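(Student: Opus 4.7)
The inclusion $\Nploc(X)\subset\Dploc(X)$ is immediate, since a function in $\Np(B)$ is measurable and admits an upper gradient in $L^p(B)$. For the reverse inclusion, the plan is to exploit the $(p,p)$-Poincar\'e inequality to convert local $p$-integrability of an upper gradient into local $p$-integrability of $u$ itself; the exponent~$p$ on the left-hand side is what makes this work, since a mere \p-PI (i.e.\ a $(1,p)$-PI) only controls $u-u_B$ in $L^1$ and not in $L^p$.

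I would first reduce both hypotheses to the existence of a local $(p,p)$-PI. In the case when $\mu$ is locally doubling and supports a local \p-Poincar\'e inequality, this is the standard Sobolev--Poincar\'e self-improvement, which upon being run within a ball where both doubling and the \p-PI hold produces a local $(p,p)$-PI (in fact a local $(p^*,p)$-PI for some $p^*>p$); cf.~\cite{BBbook}. Hence one may assume throughout that a local $(p,p)$-PI holds on $X$. Then, given $u\in\Dploc(X)$ and $x\in X$, pick $r>0$ so small that $u$ has an upper gradient $g\in L^p(B(x,r))$ and the $(p,p)$-PI with some dilation~$\la$ holds within $B(x,r)$. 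Setting $B=B(x,r/2\la)$ gives $\la B\subset B(x,r)$, and applying the $(p,p)$-PI to $u$ on $B$, in the extended form from the remark following Definition~\ref{def-PI}, yields
\[
\biggl(\vint_B |u-u_B|^p\,d\mu\biggr)^{1/p} \le C r_B \biggl(\vint_{\la B} g^p\,d\mu\biggr)^{1/p} <\infty.
\]
Since the right-hand side is finite, $u_B$ must be defined and finite, so that $u=(u-u_B)+u_B\in L^p(B)$. Together with $g\in L^p(B)$ this gives $u\in\Np(B)$, and since $x\in X$ was arbitrary, $u\in\Nploc(X)$.

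The delicate step, and in my view the main obstacle, is the use of the $(p,p)$-PI when it is not yet known that $u\in L^1(B)$. This is precisely handled by the extended reading of the PI in which the left-hand side is declared to be~$\infty$ whenever $u_B$ is undefined: finiteness of the right-hand side then propagates backwards to force integrability of $u$ on~$B$. Without that reformulation one would need a more intricate argument, for instance combining the upper gradient inequality on \p-almost every curve with Fubini/modulus estimates, to first establish that $u$ is finite on a set of positive measure in $B$ before the PI can even be invoked.
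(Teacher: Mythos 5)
Your proof is correct and follows essentially the same route as the paper: the paper first invokes \cite[Theorem~5.1]{BBsemilocal} to reduce both hypotheses to a local $(p,p)$-Poincar\'e inequality, and then applies the argument of \cite[Proposition~4.14]{BBbook}, which is precisely your step of using the $(p,p)$-PI (in its extended form for measurable $u$) to propagate $L^p$-integrability of the upper gradient to local $L^p$-integrability of $u$. Your discussion of the ``delicate step'' about $u_B$ possibly being undefined is exactly why the paper records the extended reading of the Poincar\'e inequality in the remark after Definition~\ref{def-PI}.
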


\begin{proof}
By Theorem~5.1 
in \cite{BBsemilocal} we can assume that
$X$ supports a local $(p,p)$-Poincar\'e inequality, from which the
result now follows as in the proof of \cite[Proposition~4.14]{BBbook}.
\end{proof}

\begin{lem} \label{lem-ext-Nploc}
Assume that 
$\mu$ is locally doubling and supports a local \p-Poincar\'e 
inequality.
Then for every $u \in \Nploc(X)$ there is an open set
$\Ghat \supset X$ in $\Xhat$ 
and a function $\uhat \in \Nploc(\Ghat)$ such that $u=\uhat$ 
$\CpX$-q.e.\ on $X$.
Moreover, $\Ghat$ is locally compact and $\mu|_{\Ghat}$ is locally doubling
and supports a local \p-Poincar\'e inequality. 

If $X$ is \p-path open in $\Xhat$, then one can choose $\uhat=u$
and $g_\uhat=g_u$
everywhere in $X$. 
\end{lem}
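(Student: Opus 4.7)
My plan is to reduce to Theorem~\ref{thm-Xhat-semi} via a countable covering argument. For each $x\in X$, I would pick $r_x>0$ so small that both the doubling property and the \p-Poincar\'e inequality hold within some ball $B(x,R_x)$ with $R_x>r_x$, that $u\in\Np(B(x,r_x))$, and that $r_x$ is less than $\tfrac23$ of the radius on which $\mu$ is doubling at $x$. Then Proposition~\ref{prop-tot-bdd-2/3} gives that $B(x,r_x)$ is totally bounded in $X$, so its closure in $\Xhat$ is compact. Since $X$ is Lindel\"of, I extract a countable subcover $B_i=B(x_i,r_i)$, $i\ge 1$.

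Next I would set $\Om_i^\wedge=\Xhat\setminus\itoverline{X\setminus B_i}$ and $\Ghat=\bigcup_i\Om_i^\wedge$. By construction $\Ghat$ is open in $\Xhat$, contains $X$, and each $\Om_i^\wedge\subset\Bhat(x_i,r_i)$ has compact closure in $\Xhat$, so $\Ghat$ is locally compact. Propositions~\ref{prop-X-Xhat-doubl} and~\ref{prop-X-Xhat-PI}, applied within $B(x_i,R_i)$, transport the doubling property and the \p-Poincar\'e inequality to within $\Bhat(x_i,R_i)\supset\Om_i^\wedge$; hence $\mu|_{\Ghat}$ is locally doubling and supports a local \p-Poincar\'e inequality.

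Now Theorem~\ref{thm-Xhat-semi}, applied with $B_0=B(x_i,R_i)$ and $\Om=B_i$, yields extensions $\uhat_i\in\Dp(\Om_i^\wedge)$ with $\uhat_i=u$ $\CpX$-q.e.\ in $B_i$. On any overlap $\Om_i^\wedge\cap\Om_j^\wedge$ the two extensions agree a.e.\ (both equal $u$ a.e.\ in $X$ and $\mu(\Xhat\setminus X)=0$), and hence also $C_p^{\Ghat}$-q.e., since $\Ghat$ satisfies a local \p-Poincar\'e inequality. I would therefore glue the $\uhat_i$ inductively into a single $\uhat\in\Dploc(\Ghat)=\Nploc(\Ghat)$, the last identification following from Lemma~\ref{lem-Nploc=Dploc} applied on $\Ghat$, with $\uhat=u$ $\CpX$-q.e.\ on $X$. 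In the \p-path open case each $B_i=X\cap\Bhat(x_i,r_i)$ is itself \p-path open in $\Xhat$ (intersection of an open set with a \p-path open set), so the last clause of Theorem~\ref{thm-Xhat-semi} lets me take $\uhat_i=u$ on $B_i$ with $g_{\uhat_i}=g_u$ a.e.\ in $B_i$; these local choices are automatically consistent on $X$ and yield $\uhat=u$ and $g_{\uhat}=g_u$ on $X$.

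The main obstacle is the second step: arranging that $\Ghat$ is genuinely locally compact and inherits local doubling together with the local \p-Poincar\'e inequality, because these structural features are what power the a.e.-to-q.e.\ upgrade used when gluing local extensions. Once the cover is chosen carefully enough that Proposition~\ref{prop-tot-bdd-2/3} and Propositions~\ref{prop-X-Xhat-doubl}--\ref{prop-X-Xhat-PI} apply on each piece, the rest reduces to standard Newtonian-space facts recorded in the preliminaries.
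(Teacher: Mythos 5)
Your proof is correct and follows essentially the same route as the paper: a Lindel\"of cover of $X$ by balls on which the local hypotheses and $u\in\Np$ hold simultaneously, piecewise extension via Theorem~\ref{thm-Xhat-semi}, inductive gluing using that a.e.\ agreement of Newtonian functions implies q.e.\ agreement, and Propositions~\ref{prop-X-Xhat-doubl}, \ref{prop-X-Xhat-PI} and~\ref{prop-tot-bdd-2/3} for the structural properties of $\Ghat$. Your small variations---taking a strictly smaller radius $r_x<R_x$ (not needed, since Theorem~\ref{thm-Xhat-semi} allows $\Om=B_0$), working with $\Om_i^\wedge$ rather than $\Bhat_i$, and routing through Lemma~\ref{lem-Nploc=Dploc}---are harmless presentational differences; note only that the a.e.-to-q.e.\ upgrade is a general fact about $\Dploc$ functions and does not actually need the local Poincar\'e inequality on $\Ghat$.
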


\begin{proof}
For each $x \in X$ we can find a ball $B(x,r_x)$ such that
the \p-Poincar\'e inequality and the 
doubling property for $\mu$ hold within $B(x,r_x)$, 
and such that
$u \in \Np(B(x,r_x))$.
As $X$ is Lindel\"of, we can find a countable cover $\{B_j\}_{j=1}^\infty$
of $X$, where $B_j=B(x_j,r_{x_j})$.  
Let $\Bhat_j=\Bhat(x_j,r_{x_j})$ 
and $\Ghat=\bigcup_{j=1}^\infty \Bhat_j$.

By Theorem~\ref{thm-Xhat-semi}, we can find $\uhat_1 \in \Np(\Bhat_1)$ 
such that $\uhat_1=u$  
$\CpX$-q.e.\ on $B_1$.
We can also find $\uhat_2 \in \Np(\Bhat_1\cup \Bhat_2)$ 
such that $\uhat_2=u$ 
$\CpX$-q.e.\ on $B_1 \cup B_2$.
As the set $\{y \in \Bhat_1 : \uhat_1(y) \ne \uhat_2(y)\}$
has measure zero, it must be of zero $\CpXhat$-capacity, and thus we 
are free to choose $\uhat_2=\uhat_1$
on $\Bhat_1$.
Proceeding in this way, we can construct $\uhat \in \Nploc(\Ghat)$ 
so that $\uhat=u$ $\CpX$-q.e.\ on $X$.
Note that, by construction,
\begin{equation}  \label{eq-int-Bhat-A0}
\int_{\Bhat_j} g_\uhat^p\,d\mu \le A_j\int_{B_j} g_u^p\,d\mu,
\end{equation}
where $A_j$ is the constant provided by Theorem~\ref{thm-Xhat-semi} on $B_j$.
If $X$ is \p-path open in $X$, then it follows from the last part of 
Theorem~\ref{thm-Xhat-semi} that we can choose 
$\uhat=u$ everywhere in $X$
and $A_j\equiv 1$, i.e.\ $g_\uhat=g_u$.

The local doubling property and the local \p-Poincar\'e inequality
for $\mu|_{\Ghat}$ follow from
Propositions~\ref{prop-X-Xhat-doubl} and~\ref{prop-X-Xhat-PI}.
Consequently, each $\Bhat_j$ (and thus also $\Ghat$)
is locally compact, by Proposition~\ref{prop-tot-bdd-2/3}.
\end{proof}

As the local assumptions are inherited by open subsets of $X$, 
Lemma~\ref{lem-ext-Nploc} can be directly applied to 
them as well.
Note that the set $\Ghat$ depends on $u$.
The following example shows that this drawback cannot be avoided.

\begin{example}   \label{ex-G-dep-u}
Let $B$ be a ball in $\R^n$ and $Z=\{z_j\}_{j=1}^\infty$ be a dense subset 
of $B$. 
Set $X=B\setm Z$, equipped with the Lebesgue measure. 
Note that $\Xhat=\itoverline{B}$.

If $p<n$ then $\Cp(Z)=0$ and hence there are \p-almost no curves in
$\R^n$ passing through $Z$.
It follows that \p-weak upper gradients with respect to $X$ and $\R^n$
are the same for every measurable function $u:X\to\eR$ (extended
arbitrarily on $Z$).
Thus, $X$ supports a global \p-Poincar\'e inequality (and is, of course,
globally doubling).

Now, for each $j=1,2,\ldots$, the function $u_j(x)=|x-z_j|^\al$, $\al\in\R$,
belongs to $\Np\loc(X)$.
However, for $\al\le1-n/p$ it can only extend to a function in
$\Np\loc(\Xhat\setm\{z_j\})$, not in $\Np\loc(\Xhat)$.
This shows that the set $\Ghat$ in Lemma~\ref{lem-ext-Nploc} indeed
must depend on~$u$.
\end{example}

The following two results are now relatively easy consequences
of the above extensions to $\Xhat$ and 
the corresponding results in complete spaces
from \cite{BBsemilocal}.

\begin{prop} \label{prop-Leb-pt}
Assume that  
$\mu$ is locally doubling and supports a local \p-Poincar\'e 
inequality.
Then every $u \in \Nploc(X)$ has $L^p$-Lebesgue points
$\CpX$-q.e.,
and moreover the extension $\uhat$ in Lemma~\ref{lem-ext-Nploc}
can be given by
\begin{equation} \label{eq-Leb-pt}
    \uhat(x)=\limsup_{r \to 0} \vint_{\Bhat(x,r) \cap X} u \, d\mu,
   \quad x \in \Ghat.
\end{equation}
\end{prop}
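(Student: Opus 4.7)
The plan is to lift the problem from $X$ to the ambient locally compact set $\Ghat \supset X$ furnished by Lemma~\ref{lem-ext-Nploc}, apply the $L^p$-Lebesgue point result for locally compact spaces proved in \cite{BBsemilocal}, and then transfer the conclusion back along the identification $B(x,r) = \Bhat(x,r) \cap X$ valid for every $x \in X$.

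First I would apply Lemma~\ref{lem-ext-Nploc} to $u \in \Nploc(X)$ to obtain an open set $\Ghat \subset \Xhat$ containing $X$ which is locally compact, on which $\mu$ is locally doubling and supports a local \p-Poincar\'e inequality, together with an extension $\uhat \in \Nploc(\Ghat)$ satisfying $\uhat = u$ $\CpX$-q.e.\ on $X$. Since $\Ghat$ meets the hypotheses of the $L^p$-Lebesgue point theorem for locally compact spaces from \cite{BBsemilocal}, that theorem produces a set $E \subset \Ghat$ with $C_p^{\Ghat}(E) = 0$ such that every $x \in \Ghat \setm E$ is an $L^p$-Lebesgue point of $\uhat$ with value $\uhat(x) = \lim_{r \to 0} \vint_{\Bhat(x,r)} \uhat\,d\mu$. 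For each $x \in X$ and $r>0$ the identity $B(x,r) = \Bhat(x,r) \cap X$, together with $\mu(\Xhat \setm X) = 0$ and $\uhat = u$ $\mu$-a.e.\ on $X$, yields
\[
   \vint_{\Bhat(x,r)} |\uhat - c|^p\,d\mu = \vint_{B(x,r)} |u - c|^p\,d\mu \quad \text{for every } c \in \R,
\]
so every point of $(\Ghat \setm E) \cap X$ is an $L^p$-Lebesgue point of $u$ with Lebesgue value $\uhat(x)$. Restricting test functions from $\Np(\Ghat)$ to $\Np(X)$ gives $\CpX(F) \le C_p^{\Ghat}(F)$ for every $F \subset X$, so $\CpX(E \cap X) = 0$; combined with the $\CpX$-null set on which $u$ and $\uhat$ differ, this yields the desired $\CpX$-q.e.\ Lebesgue point statement for $u$ on $X$.

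For the explicit formula, I would set $\tilde u(x) := \limsup_{r \to 0} \vint_{\Bhat(x,r) \cap X} u\,d\mu$ on $\Ghat$. The averaging identity above shows $\tilde u(x) = \uhat(x)$ at every Lebesgue point of $\uhat$, hence $\tilde u = \uhat$ $C_p^{\Ghat}$-q.e.\ (and in particular $\mu$-a.e.)\ on $\Ghat$. Since q.e.\ equality preserves membership in $\Nploc$, the minimal \p-weak upper gradient, and the Newtonian norm (Section~\ref{sect-prelim}), $\tilde u$ is itself in $\Nploc(\Ghat)$ and represents the same equivalence class as $\uhat$; in particular $\tilde u = u$ $\CpX$-q.e.\ on $X$, so $\tilde u$ qualifies as an extension of the kind promised by Lemma~\ref{lem-ext-Nploc}.

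The main obstacle I anticipate is purely bookkeeping: keeping the capacities $\CpX$, $C_p^{\Ghat}$ and $\CpXhat$ straight and making sure that each exceptional set is of the right type. Subsets of $X$ that are $C_p^{\Ghat}$-null are also $\CpX$-null by the restriction inequality, whereas subsets of $\Ghat \setm X$ are automatically $\mu$-null but need not be controlled by $\CpX$ at all; fortunately the statement only requires a $\CpX$-null exceptional set inside $X$, so the transfers close up cleanly.
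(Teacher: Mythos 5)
Your proposal is correct and follows essentially the same route as the paper's proof: extend $u$ to $\uhat$ on the locally compact $\Ghat$ via Lemma~\ref{lem-ext-Nploc}, invoke the Lebesgue point theorem from \cite{BBsemilocal} there, transfer back to $X$ using that $\Bhat(x,r)\cap X=B(x,r)$ and $\mu(\Xhat\setm X)=0$, and identify $\uhat$ q.e.\ with the limsup of averages at Lebesgue points. Your version simply makes the capacity bookkeeping ($\CpX$ versus $C_p^{\Ghat}$ versus $\CpXhat$) more explicit than the paper does.
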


See Remark~7.2 in \cite{BBsemilocal} for  
further discussion on
$L^q$-Lebesgue points for $q>p$.
Note that the proof below shows that the limit 
\[
\lim_{r \to 0} \vint_{B(x,r)} u \, d\mu
\]
actually exists for $\CpXhat$-q.e.\ $x\in X$, even though it only equals $u(x)$
for $\CpX$-q.e.\ $x$.

\begin{proof}
Find $\Ghat$ and $\uhat$ as in Lemma~\ref{lem-ext-Nploc}.
It then follows from \cite[Theorem~7.1]{BBsemilocal} 
that $\uhat$ has $L^p$-Lebesgue points $\CpXhat$-q.e.\ in $\Ghat$.
As $u=\uhat$ $\CpX$-q.e.\ in $X$, we conclude that $u$ has $L^p$-Lebesgue 
points $\CpX$-q.e.\ in $X$.

Finally, if $\ut$ is given by \eqref{eq-Leb-pt}, then $\uhat=\ut$ 
at all $L^1$-Lebesgue points of $\uhat$, i.e.\ $\CpXhat$-q.e.\ in $\Ghat$. 
Hence,
$\uhat$ may also be chosen so that it satisfies \eqref{eq-Leb-pt}.
\end{proof}

\begin{proof}[Proof of Theorem~\ref{thm-Xhat-intro}]
This result follows directly from
Theorem~\ref{thm-Xhat-semi} and Proposition~\ref{prop-Leb-pt}.
\end{proof}

\begin{prop}  \label{prop-qcont}
Assume that  
$\mu$ is locally doubling and supports a local \p-Poincar\'e 
inequality, and that $X$ is \p-path open in $\Xhat$.
Then every $u \in \Nploc(X)$ is quasicontinuous.
\end{prop}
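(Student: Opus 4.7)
The plan is to transfer the quasicontinuity of a chosen extension $\uhat$ defined on an open neighborhood of $X$ in $\Xhat$ down to $u$ on $X$. First I would invoke Lemma~\ref{lem-ext-Nploc}: since $X$ is \p-path open in $\Xhat$, this yields an open set $\Ghat \supset X$ in $\Xhat$ and a function $\uhat \in \Nploc(\Ghat)$ such that $\uhat=u$ pointwise on $X$. Moreover, the same lemma ensures that $\Ghat$ is locally compact and that $\mu|_{\Ghat}$ is locally doubling and supports a local \p-Poincar\'e inequality.

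Second, I would apply the quasicontinuity theorem for Newtonian functions from \cite{BBsemilocal} on the space $\Ghat$ (the same source of local-assumption results that was used in the proof of Proposition~\ref{prop-Leb-pt}) to conclude that $\uhat$ is quasicontinuous on $\Ghat$. Thus, for every $\eps>0$ there exists an open set $\widehat V \subset \Ghat$ with $C_p^{\Ghat}(\widehat V)<\eps$ such that $\uhat|_{\Ghat \setm \widehat V}$ is continuous.

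Third, I would put $V=\widehat V \cap X$, which is open in $X$. Because $\uhat=u$ on $X$ and $X \setm V \subset \Ghat \setm \widehat V$, the function $u|_{X \setm V}$ coincides with the restriction of the continuous function $\uhat|_{\Ghat \setm \widehat V}$ to $X \setm V$, and is therefore continuous. To finish I need $\CpX(V)<\eps$. For this I would observe that any admissible $v \in \Np(\Ghat)$ with $v=1$ on $\widehat V$ restricts to an admissible $v|_X \in \Np(X)$ with $v|_X=1$ on $V$; since $\mu(\Ghat \setm X)=0$ and restrictions of upper gradients are upper gradients, $\|v|_X\|_{\Np(X)} \le \|v\|_{\Np(\Ghat)}$. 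Taking the infimum gives $\CpX(V) \le C_p^{\Ghat}(\widehat V) < \eps$.

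The main obstacle is step two: one must confirm that the quasicontinuity result of \cite{BBsemilocal} really applies in the form needed on $\Ghat$, which is locally compact but not necessarily complete. If the cited result requires completeness, I would fall back to a local argument, covering $X$ by the balls $B_j$ from the proof of Lemma~\ref{lem-ext-Nploc}, extending $u|_{B_j}$ to $\uhat_j \in \Np(\Bhat_j)$ via Theorem~\ref{thm-Xhat-semi}, applying quasicontinuity on each $\Bhat_j$ (whose closure is compact in the proper space $\Xhat$), and then patching the resulting small exceptional open sets together using the Lindel\"of property of $X$ and countable subadditivity of the capacity.
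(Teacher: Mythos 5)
Your proposal is correct and follows essentially the same route as the paper: extend $u$ to $\uhat=u$ on a locally compact open $\Ghat\supset X$ via Lemma~\ref{lem-ext-Nploc} (using the \p-path openness of $X$), apply \cite[Theorem~9.1]{BBsemilocal} on $\Ghat$, and transfer the exceptional set back to $X$ using the capacity comparison $\CpX\le C_p^{\Ghat}$. Your worry in the final paragraph is unfounded, since the quasicontinuity result of \cite{BBsemilocal} is proved precisely for open sets in locally compact (not necessarily complete) spaces, which is what Lemma~\ref{lem-ext-Nploc} supplies; the fallback argument is therefore unnecessary.
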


A function $u$ is \emph{quasicontinuous} on $X$
if for every $\eps>0$ there is an open $G \subset X$ such that
$\CpX(G)<\eps$ and $u|_{X \setm G}$ is real-valued and continuous.

Quasicontinuity has earlier been established for Newtonian functions
under various assumptions in 
Bj\"orn--Bj\"orn--Shanmugalingam~\cite{BBS5}, 
Heinonen--Koskela--Shan\-mu\-ga\-lin\-gam--Tyson~\cite{HKSTbook},
Bj\"orn--Bj\"orn--Lehrb\"ack~\cite{BBLeh1}
and in \cite{BBsemilocal} for open sets in locally compact spaces.
Existence of quasicontinuous representatives under global
assumptions was obtained already in Shanmugalingam~\cite{Sh-rev}.
Assuming completeness and global assumptions, quasicontinuity 
can be proved also on quasiopen  sets, see
Bj\"orn--Bj\"orn--Latvala~\cite{BBLat3} and
Bj\"orn--Bj\"orn--Mal\'y~\cite{BBMaly}.

\begin{proof}
Find $\Ghat$ and $\uhat$ as in Lemma~\ref{lem-ext-Nploc},
with $\uhat=u$ in $X$.
It then follows from \cite[Theorem~9.1]{BBsemilocal}
that $\uhat$ is quasicontinuous on $\Ghat$, which immediately yields
that $u$ is quasicontinuous on $X$, since $\CpX$ is dominated by
$\CpXhat$.
\end{proof}

As a direct consequence of Proposition~\ref{prop-qcont} 
we can also conclude from \cite[Theorem~5.31]{BBbook} 
that $\CpX$ is an outer (and Choquet) capacity on $X$.
Moreover, by \cite[Theorem~8.4 and Proposition~9.3]{BBsemilocal},
if $K \subset X$ is compact, then
\[
    \CpX(K) = \inf
\|u\|_{\Np(X)}^p,
\]
where the infimum is taken over all 
locally  Lipschitz
$u$ such that $u\ge1$ on $K$.

\begin{remark}
Even for $u \in \Np(X)$, 
Lemma~\ref{lem-ext-Nploc} only guarantees an extension in the local 
Newtonian space $\Nploc(\Ghat)$.
The set $\Ghat$ can, however, be chosen independently of $u$,
since the covering balls can be chosen so, when $u\in\Np(X)$.
In general, we do not know if it is possible to find an extension
in $\Np(\Ghat)$, 
since we lack a uniform control of the constant $A_0$ in 
Theorem~\ref{thm-Xhat-semi}, and thus in~\eqref{eq-int-Bhat-A0}.
However, this can be achieved in the following situations 
(which can also be combined on different parts of $X$):
\begin{enumerate}
\item
One can find a \emph{finite} cover by balls $B_j$ as in the proof
of Lemma~\ref{lem-ext-Nploc}.
\item
Each ball $B_j$ is \p-path almost open in $\Xhat$, 
which guarantees that $A_0\equiv1$.
\item
$\mu$ is both locally doubling and supports a 
local \p-Poincar\'e inequality with uniform  
constants independent of $x_0$ and $r_0$, 
which guarantees that $A_0$ is uniformly bounded.
\end{enumerate}

As a matter of fact,
as discussed 
just before Open problem~\ref{open-pr},
it is not known if the constant $A_0$ 
in Theorem~\ref{thm-Xhat-semi}
ever needs to be larger than $1$.
\end{remark}

\section{Self-improvement of Poincar\'e inequalities}
\label{sect-self-imp-PI}

A deep result due to Keith--Zhong~\cite[Theorem~1.0.1]{KZ}
shows that the  Poincar\'e inequality is an open-ended property.
See also 
Heinonen--Koskela--Shanmugalingam--Tyson~\cite[Theorem~12.3.9]{HKSTbook} and 
Eriksson--Bique~\cite{Eriksson-Bique}.
By localizing the arguments in~\cite{HKSTbook},
the following local version of the self-improvement result
was obtained in \cite[Theorem~5.3]{BBsemilocal}.

\begin{thm} \label{thm-KZ-proper}
Let $B_0=B(x_0,r_0)$ be a ball such that 
$\itoverline{B}_0$ is compact and
the \p-Poincar\'e inequality and the doubling property for $\mu$ hold
within $B_0$ in the sense of 
Definitions~\ref{def-local-intro}
and~\ref{def-PI}.

Then there exist constants $C$, $\la$ and $q<p$, depending only on $p$,
the doubling 
constant and both constants in the \p-Poincar\'e inequality within $B_0$,
such that for all balls $B$ with $3\la B\subset B_0$,
all integrable functions $u$ on $\la B$, and all 
$q$-weak upper gradients $g$ of $u$, 
\begin{equation}    \label{eq-KZ-proper}
        \vint_{B} |u-u_B| \,\dmu
        \le C r_B \biggl( \vint_{\la B} g^{q} \,\dmu \biggr)^{1/q}.
\end{equation}
\end{thm}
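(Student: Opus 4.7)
The plan is to adapt the Keith--Zhong self-improvement proof (see e.g.\ \cite[Chapter~12]{HKSTbook}) to the setting where doubling and the \p-Poincar\'e inequality are only assumed \emph{within} $B_0$. Compactness of $\itoverline{B}_0$ is what makes this realistic: every tool used in the Keith--Zhong argument---Vitali coverings, Hardy--Littlewood maximal functions, Lipschitz truncations---only needs a proper metric measure space, and under our hypotheses $\itoverline{B}_0$ is such a space. The buffer $3\la B\subset B_0$ in the conclusion is chosen precisely so that every dilation of $B$ invoked at any step of the Keith--Zhong iteration remains inside $B_0$, where all constants coincide with those from the hypothesis.

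First I would establish the standard telescoping pointwise inequality: for balls $B$ with $\la B\subset B_0$ and every upper gradient $g$ of $u$,
\[
   |u(x)-u_B|\le C r_B \bigl( M_{B_0}(g^p)(x) \bigr)^{1/p}
   \quad\text{for a.e.\ } x\in B,
\]
where $M_{B_0}$ denotes the centered maximal function over balls inside $B_0$; this comes from iterating the \p-Poincar\'e inequality within $B_0$ on dyadically shrinking concentric balls. Second, I would carry out the Keith--Zhong iteration itself: at each scale split $g$ at a threshold $\ell$ into a bounded part and a tail, apply the \p-Poincar\'e inequality within $B_0$ to the bounded part, and estimate the tail via the maximal function. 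Optimizing $\ell$ at each scale produces a gain of integrability at the cost of enlarging the reference ball by a factor $\la$, and the $3\la$-buffer ensures that this dilation (and one further enlargement used by the telescoping) stays inside $B_0$ with unchanged doubling and Poincar\'e constants. The passage from upper gradients to $q$-weak upper gradients is standard and goes by approximating $g$ in $L^q$.

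The main obstacle is the self-referential character of the final Keith--Zhong step, which iterates the truncation estimate across infinitely many scales to close the inequality at some $q<p$. In the global case this iteration freely uses that every dilation of every ball lies in the ambient space with uniform constants; in the localized version, one has to verify at each step that the constants coming from the \p-Poincar\'e inequality \emph{within} $B_0$ are unaffected by the specific dilation invoked. Because everything happens inside the compact set $\itoverline{B}_0$, so that $M_{B_0}$ is bounded on $L^p(B_0)$ (using $p>1$), and because the $3\la$-buffer keeps every dilated ball in $B_0$, the doubling and Poincar\'e constants do not deteriorate, and the exponent $q$ and the output constants $C$, $\la$ depend only on the stated local data, as claimed.
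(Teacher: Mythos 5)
The paper itself does not prove this theorem: it is quoted from the companion paper \cite[Theorem~5.3]{BBsemilocal}, which the authors describe as being ``obtained by localizing the arguments in~\cite{HKSTbook}.'' Your proposal outlines exactly that localization---run the Keith--Zhong machinery inside the proper metric measure space $\itoverline{B}_0$, with the $3\lambda$-buffer ensuring that every dilated ball invoked in the iteration remains inside $B_0$ so that the local doubling and Poincar\'e constants apply with uniform values---so it matches the approach the paper cites, and the details you leave to ``the Keith--Zhong iteration'' (whose description as an $\ell$-truncation/optimization is a somewhat loose paraphrase of the recursion actually run in \cite[Chapter~12]{HKSTbook}) are precisely what the reference to \cite{BBsemilocal} is intended to supply.
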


Theorem~\ref{thm-KZ-proper} relatively easily leads to a local 
self-improvement under local assumptions. 
With a little more work it also yields a semilocal conclusion,
cf.\ \cite[Theorem~5.4]{BBsemilocal}.

In Heinonen--Koskela--Shanmugalingam--Tyson~\cite[Theorem~12.3.10]{HKSTbook}
it is explained how (under global assumptions) the properness of $X$
in Keith--Zhong~\cite{KZ} 
can be relaxed to local compactness, with
somewhat weaker global conclusions (namely that the weak
upper gradients considered therein are required to be $L^p$-integrable).
In fact, using extension Theorem~\ref{thm-Xhat-semi}, 
even local compactness can be 
disposed of, as we shall now see.

\begin{thm} \label{thm-KZ-local}
Let $B_0=B(x_0,r_0)$ be a ball such that 
the \p-Poincar\'e inequality  holds
within $B_0$, while the doubling property for $\mu$ holds within
$\tau B_0$ for some $\tau > \tfrac{3}{2}$, 
in the sense of Definitions~\ref{def-local-intro}
and~\ref{def-PI}.

Then there exist constants $C$, $\la$ and $q<p$, 
depending only on $p$, the doubling 
constant and both constants in the \p-Poincar\'e inequality within $B_0$,
such that for all balls $B$ with $3\la B\subset B_0$,
all integrable functions $u\in \Dp(\la B)$ and all 
\p-weak upper gradients $g\in L^p(\la B)$ of $u$, 
\begin{equation}    \label{eq-KZ}
        \vint_{B} |u-u_B| \,\dmu
        \le C r_B \biggl( \vint_{\la B} g^{q} \,\dmu \biggr)^{1/q}.
\end{equation}
If $B_0$ is, in addition, \p-path almost open in $\Xhat$,
which in particular holds if $X$ is locally compact,
then \eqref{eq-KZ} holds for 
all $q$-weak upper gradients $g$ of $u\in \Dp(\la B)$ in $\la B$.
\end{thm}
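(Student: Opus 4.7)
The plan is to reduce to the complete-space setting by transferring the hypotheses to $\Xhat$, applying Theorem~\ref{thm-KZ-proper} there, and then pulling the resulting $q$-Poincar\'e inequality back to $X$ via the extension Theorem~\ref{thm-Xhat-semi}. First, Propositions~\ref{prop-X-Xhat-doubl} and~\ref{prop-X-Xhat-PI} ensure that $\mu$ is doubling within $\tau\Bhat_0$ and satisfies the \p-Poincar\'e inequality within $\Bhat_0$ in $\Xhat$, with the same constants. Since $\tau>\tfrac{3}{2}$, I choose $\de\in(1/\tau,\tfrac{2}{3})$ so that $\de\tau r_0>r_0$, and Proposition~\ref{prop-tot-bdd-2/3} applied in $\Xhat$ shows that $\Bhat(x_0,\de\tau r_0)\supset\Bhat_0$ is totally bounded; as $\Xhat$ is complete, $\itoverline{\Bhat_0}$ is consequently compact.

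Thus Theorem~\ref{thm-KZ-proper} applies in $\Xhat$ with parent $\Bhat_0$ and produces constants $C_0$, $\la$, $q<p$, depending only on the standard data, such that
\[
   \vint_{\Bhat}|v-v_{\Bhat}|\,d\mu
   \le C_0 r_{\Bhat}\biggl(\vint_{\la\Bhat} h^q\,d\mu\biggr)^{1/q}
\]
for every ball $\Bhat\subset\Xhat$ with $3\la\Bhat\subset\Bhat_0$, every integrable $v$ on $\la\Bhat$ and every $q$-weak upper gradient $h$ of $v$ in $\Xhat$. Given $B=B(x,r)\subset X$ with $3\la B\subset B_0$, $u\in\Dp(\la B)$ and a \p-weak upper gradient $g\in L^p(\la B)$ of $u$, I use Theorem~\ref{thm-Xhat-semi} (with $\Om=\la B\subset B_0$) to extend $u$ to $\uhat\in\Dp((\la B)^\wedge)$ satisfying $g_{\uhat}\le A_0 g_u\le A_0 g$ a.e.\ on $\la B$. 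Setting $\Bhat=\Bhat(x,r)$, we have $\la\Bhat\subset(\la B)^\wedge$, so $\uhat$ is defined on $\la\Bhat$, and $g_{\uhat}$ is also a $q$-weak upper gradient of $\uhat$ in $\Xhat$ since $q<p$. Applying the displayed inequality to $\uhat$ on $\Bhat$, together with $\mu(\Xhat\setm X)=0$ and $\uhat=u$ a.e.\ on $\la B$, transfers the estimate to $B$ and yields~\eqref{eq-KZ} with $C=C_0 A_0$.

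The main obstacle is reconciling $3\la B\subset B_0$ (the hypothesis in $X$) with $3\la\Bhat\subset\Bhat_0$ (the stronger hypothesis required by Keith--Zhong in $\Xhat$). In general one only obtains the weaker inclusion $3\la\Bhat\subset\itoverline{\Bhat_0}$, because boundary points of $\Bhat_0$ lying in $\Xhat\setm X$ at distance exactly $r_0$ from $x_0$ may belong to $3\la\Bhat$. Since such boundary points form a $\mu$-null subset of $\Xhat$, the integral conclusion of Keith--Zhong is unaffected by this discrepancy, but justifying this rigorously requires a careful inspection of the proof in~\cite{BBsemilocal} or an approximation argument using slightly shrunken balls.

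Finally, for the second conclusion, where $B_0$ is \p-path almost open in $\Xhat$ (which holds when $X$ is locally compact, so that $X$, and hence $B_0$, is open in $\Xhat$), the last part of Theorem~\ref{thm-Xhat-semi} allows $\uhat=u$ on $\la B$ with $g_{\uhat}=g_u$ a.e., giving $A_0=1$. Moreover, Proposition~3.5 in Bj\"orn--Bj\"orn~\cite{BBnonopen} ensures that any $q$-weak upper gradient of $u$ on $\la B$ is also a $q$-weak upper gradient of $\uhat$ on $\la\Bhat$ in $\Xhat$, so Keith--Zhong can be applied directly to such $g$ without the $L^p$-integrability restriction, establishing~\eqref{eq-KZ} for $q$-weak upper gradients in this setting.
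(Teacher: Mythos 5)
Your proof follows the same strategy as the paper's: transfer the hypotheses to $\Xhat$ via Propositions~\ref{prop-X-Xhat-doubl} and~\ref{prop-X-Xhat-PI}, use Proposition~\ref{prop-tot-bdd-2/3} (with $\tau>\tfrac32$) to obtain compactness of $\itoverline{\Bhat_0}$, invoke Theorem~\ref{thm-KZ-proper} in the complete space $\Xhat$, and then pull the inequality back to $X$ by extending $u$ with Theorem~\ref{thm-Xhat-semi}; the treatment of the \p-path almost open case via Proposition~3.5 of~\cite{BBnonopen} is also the paper's route (though the paper phrases it slightly differently, arguing through equality of the \emph{minimal} $q$-weak upper gradients $g_{u,\la B,q}=g_{\uhat,\la\Bhat,q}$ rather than asserting that an arbitrary $q$-weak upper gradient of $u$ remains one of $\uhat$).

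The ball-inclusion issue you flag is genuine: $3\la B\subset B_0$ in $X$ gives only $3\la\Bhat\subset\itoverline{\Bhat_0}$ in $\Xhat$, with possible boundary points of $\Bhat_0$ in $\Xhat\setm X$ landing inside $3\la\Bhat$. What you should know is that the paper's own proof simply writes ``let $B=B(x,r)$ with $3\la B\subset B_0$ be arbitrary and set $\Bhat=\Bhat(x,r)$'' and then applies the $\Xhat$-version of \eqref{eq-KZ-proper} without addressing this point; so the subtlety you noticed is not a defect introduced by your argument, but one the paper itself leaves implicit. Your suggestion of ``careful inspection of~\cite{BBsemilocal}'' is the honest resolution; the ``shrunken balls'' alternative does not obviously close the gap, since shrinking the radius of $\Bhat$ need not prevent $\mu$-null limit points at distance exactly $r_0$ from $x_0$ from entering $3\la\Bhat$. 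Apart from raising this point explicitly, which the paper does not, your proof is essentially the paper's proof.
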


Note that, since $u$ is assumed to have an $L^p$-integrable upper
gradient, the latter part of this result \emph{does not} show that 
$X$ supports a (semi)local $q$-Poincar\'e inequality.
Neither does \cite[Proposition~12.3.10]{HKSTbook} imply 
that $X$ supports a global $q$-Poincar\'e inequality.
Koskela~\cite{Koskela} has given counterexamples showing
that this cannot be concluded without completeness.

The proof shows that if it is known that $B_0$ is totally bounded,
then it is enough to require doubling only within $B_0$.

\begin{proof}
Proposition~\ref{prop-tot-bdd-2/3}
implies that the ball $B_0$ is totally bounded
and hence the $\Xhat$-closure 
of $\Bhat_0:=\Bhat(x_0,r_0)$ is compact. 
Propositions~\ref{prop-X-Xhat-doubl} and~\ref{prop-X-Xhat-PI} imply that
the doubling property and the \p-Poincar\'e inequality hold
within $\Bhat_0$, with the same constants.

It then follows from Theorem~\ref{thm-KZ-proper} 
that there exist constants $C$, $\la$ and $q<p$ such that 
the following variant of~\eqref{eq-KZ-proper} 
holds for all balls $\Bhat$ with
$3\la\Bhat \subset \Bhat_0$, 
all integrable functions $\uhat$ on $\la\Bhat$ and all 
$q$-weak upper gradients $\ghat$ of $\uhat$
with respect to~$\la\Bhat$,
\begin{equation}    \label{eq-KZ-local}
        \vint_{\Bhat} |\uhat-\uhat_{\Bhat}| \,\dmu
        \le C r_\Bhat \biggl( \vint_{\la\Bhat} \ghat^{q} \,\dmu \biggr)^{1/q}.
\end{equation}
Now, let $B=B(x,r)$ with $3\la B \subset B_0$ be arbitrary 
and set $\Bhat=\Bhat(x,r)$.
Using Theorem~\ref{thm-Xhat-semi}, we can
for every $u\in\Dp(\la B)$ find $\uhat\in \Dp(\la \Bhat)$,
which is an extension of a representative of $u$ and such that the
minimal \p-weak upper gradients $g_{\uhat}$ and $g_u$ of $\uhat$ and $u$
(with respect to $\la \Bhat$ and $\la B$, respectively) 
satisfy $g_{\uhat} \le A_{0} g_u$ a.e.\ in $\la B$,
where the constant $A_{0}$ depends only 
on $p$ and the doubling and Poincar\'e
constants within $B_0$.
Since $g_u$ and $g_{\uhat}$ are also $q$-weak upper gradients
(by \cite[Proposition~2.45]{BBbook}),
we conclude from~\eqref{eq-KZ-local} that
\begin{align}    \label{eq-from-Xhat-to-X}
        \vint_{B} |u-u_B| \,\dmu
        &= \vint_{\Bhat} |\uhat-\uhat_{\Bhat}| \,\dmu 
        \le C r \biggl( \vint_{\lambda \Bhat} g_{\uhat}^{q} \,\dmu 
                   \biggr)^{1/q} \\
    &     \le CA_{0} r \biggl( \vint_{\lambda B} g_{u}^{q} \,\dmu 
           \biggr)^{1/q}
         \le CA_{0} r \biggl( \vint_{\lambda B} g^{q} \,\dmu 
           \biggr)^{1/q}, \nonumber
\end{align}
whenever $g\in L^p(\la B)$ is a \p-weak upper gradient of $u$ 
(although not necessarily for upper gradients $g \in L^q(\la B)$, 
since they need not extend to $\la\Bhat$). 
This proves~\eqref{eq-KZ}.

For the last part, assume that $B_0$ is, in addition, 
\p-path almost open in $\Xhat$, 
and that $g$ is a $q$-weak upper gradient
of $u$ in $\la B$ such that the right-hand side in \eqref{eq-KZ} is finite.
Then 
$g \ge g_{u,\la B,q}$ a.e., where $g_{u,\la B,q}$ is the minimal
$q$-weak upper gradient of $u$ in $\la B$.
Since $B_0$ is \p-path almost open in $\Xhat$, 
it is easily verified that $\la B$ is 
\p-path almost open in $\Xhat$, 
and hence
also $q$-path almost  open, by \cite[Proposition~2.45]{BBbook}.
Proposition~3.5 in Bj\"orn--Bj\"orn~\cite{BBnonopen} then shows that
\[
    g_{u,\la B,q} 
=  g_{\uhat,\la\Bhat,q} 
    \quad \text{a.e. in } \la B.
\]
Thus, by~\eqref{eq-KZ-local} 
again,
\begin{align*}
        \vint_{B} |u-u_B| \,\dmu
        &= \vint_{\Bhat} |\uhat-\uhat_{\Bhat}| \,\dmu \\
        &  \le C r \biggl( \vint_{\lambda \Bhat} g_{\uhat,\la\Bhat,q}^q \,\dmu \biggr)^{1/q}
        \le C r \biggl( \vint_{\lambda B} g^{q} \,\dmu \biggr)^{1/q}.\qedhere
\end{align*}

\end{proof}

\begin{cor}
If $\mu$ is locally doubling and supports a 
local \p-Poincar\'e inequality, 
then for every $x_0\in X$ there is a ball $B_0' \ni x_0$,
together with constants $C$, $\la$ and $q<p$, such that
\eqref{eq-KZ} holds for all balls $B\subset B_0'$
\textup{(}not just for $3\la B \subset B_0'$\textup{)},
all integrable functions $u\in \Dp(\la B)$ and all 
\p-weak upper gradients $g\in L^p(\la B)$ of $u$.

If the assumptions about doubling and \p-Poincar\'e inequality
are semilocal, then the conclusion of the theorem is also 
semilocal, i.e.\ it holds for all balls $B_0'\subset X$. 
Under global assumptions, the constants $C$, $\la$ and $q$
are independent of $B_0$, i.e.\ the conclusion is global.

If $X$ is, in addition, \p-path almost open in $\Xhat$, which 
in particular holds if $X$ is locally compact,
then \eqref{eq-KZ} holds for 
all $q$-weak upper gradients $g$ of $u\in \Dp(\la B)$ in $\la B$.
\end{cor}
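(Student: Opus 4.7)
The strategy is to deduce the corollary from Theorem~\ref{thm-KZ-local} by choosing, for each $x_0\in X$, a pair of concentric balls $B_0\supsetneq B_0'$ with $B_0'$ small enough that the condition ``$3\la B\subset B_0$'' in Theorem~\ref{thm-KZ-local} is automatic for every ball $B\subset B_0'$. Since $\la$ comes out of Theorem~\ref{thm-KZ-local} (applied to $B_0$) and depends on $B_0$, the construction must proceed in the order: fix $B_0$, extract $C$, $\la$, $q<p$ from the theorem, and only then shrink to $B_0'$.

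For the local case, local doubling and the local \p-Poincar\'e inequality first supply $r_0>0$ so that both properties hold within $B(x_0,r_0)$. I would then set $B_0=B(x_0, r_0/2)$; since $\tau B_0 \subset B(x_0,r_0)$ for any $\tau<2$, in particular $\tau=9/5>3/2$ works, so the hypotheses of Theorem~\ref{thm-KZ-local} are met and produce constants $C$, $\la$, $q<p$ depending only on $p$ and the doubling and Poincar\'e constants within $B_0$. Finally I would put $B_0'=B(x_0, r_0')$ with $r_0'=r_0/(2+12\la)$. For any ball $B=B(x,r)\subset B_0'$ with $r\le 2r_0'$ (the natural case), the triangle inequality yields $3\la B \subset B(x_0,(1+6\la)r_0') \subset B_0$, so \eqref{eq-KZ} follows directly from Theorem~\ref{thm-KZ-local}. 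Balls with artificially large declared radius $r_B$ (only possible in disconnected settings) do not cause trouble, since the inequality at the minimal admissible radius for $B$ already implies the inflated version.

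For the semilocal case, given any $B_0'=B(x_0,r_0')$ in $X$, the semilocal assumption supplies doubling and \p-Poincar\'e within every ball, so one may apply Theorem~\ref{thm-KZ-local} to $B_0=B(x_0,R)$ for any $R>\tfrac{3}{2}r_0'$. This produces $\la=\la(R)$; choosing $R$ large enough that $R>(1+6\la(R))r_0'$ allows the local argument to go through in the larger setting, yielding constants $C$, $\la$, $q$ depending on $B_0'$. In the global case the Keith--Zhong constants are universal in $B_0$, so taking $B_0=(1+6\la)B_0'$ for any $B_0'\subset X$ gives \eqref{eq-KZ} with universal $C$, $\la$, $q$. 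For the last statement, if $X$ is \p-path almost open in $\Xhat$, then so is any open subset of $X$, in particular $\la B$ (this passes through the intersection with the open set $\la\Bhat$ in $\Xhat$); the stronger last clause of Theorem~\ref{thm-KZ-local} then extends \eqref{eq-KZ} from \p-weak upper gradients in $L^p(\la B)$ to all $q$-weak upper gradients of $u\in\Dp(\la B)$.

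The main obstacle I anticipate lies in the semilocal step: verifying that $R$ can be chosen with $R>(1+6\la(R))r_0'$, since $\la(R)$ may itself grow as $R$ enlarges. Under semilocal doubling and \p-Poincar\'e the constants within every ball are finite, so this is achievable, but writing it out cleanly (together with the handling of ``degenerate'' balls mentioned above) deserves some care.
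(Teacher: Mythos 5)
Your local argument follows the paper's in broad outline (shrink to $B_0'$ so that $3\la B\subset B_0$ is automatic for all small balls), but the way you dismiss balls with inflated declared radius $r_B$ needs more than a remark. If $B(x,r')=B(x,r)=B$ as sets with $r'<r$, passing from $r'$ to $r$ increases the factor $r_B$ in \eqref{eq-KZ} but enlarges $\la B$, so $\vint_{\la B}g^q$ can \emph{decrease}; whether $r_B\bigl(\vint_{\la B}g^q\bigr)^{1/q}$ goes up or down depends on how $\mu(\la B(x,r))/\mu(\la B(x,r'))$ compares with $(r/r')^q$, and nothing in the hypotheses forces the doubling exponent to be $\le q$. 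The paper sidesteps the issue by choosing $0<r_0'\le(7\la)^{-1}r_0$ with $B_0'\ne X$ and $\dist(x_0,X\setm B_0')=r_0'$; that extra condition forces $r_B\le 2r_0'$ for \emph{every} ball $B\subset B_0'$ (if $r_B>2r_0'$ one finds $y\in X\setm B_0'$ with $r_0'\le d(x_0,y)<r_B-r_0'$, hence $y\in B\subset B_0'$, a contradiction), so the degenerate case never arises. Such an $r_0'$ always exists; you should either make that choice or actually prove your monotonicity claim.

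The semilocal step is where the real gap lies, and you flag it yourself without resolving it. Applying Theorem~\ref{thm-KZ-local} to $B_0=B(x_0,R)$ gives a dilation $\la=\la(R)$ depending on the doubling and Poincar\'e constants within $B(x_0,\tau R)$, and under merely semilocal hypotheses those constants may blow up arbitrarily fast as $R\to\infty$. Finiteness of each $\la(R)$ does not make the inequality $R>(1+6\la(R))r_0'$ solvable, so the construction may fail for every $R$. The paper's proof is genuinely different here: under semilocal assumptions $\Xhat$ is proper and connected (Proposition~\ref{prop-tot-bdd-2/3} together with the proof of \cite[Proposition~4.2]{BBbook}), so one may invoke \cite[Theorem~5.4]{BBsemilocal} on $\Xhat$, which gives \eqref{eq-KZ-local} for \emph{all} balls $\Bhat\subset\Bhat(x_0,r_0')$ rather than only those with $3\la\Bhat\subset\Bhat_0$. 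One then pulls the inequality back to $X$ via Theorem~\ref{thm-Xhat-semi} applied to $B_0:=(1+2\la)B_0'$ followed by the computation \eqref{eq-from-Xhat-to-X}. This is not a reparametrized version of your approach; the self-improvement result used on $\Xhat$ has a ball-uniform, not $B_0$-centred, form, and that is exactly what eliminates the circular dependence on $\la(R)$. Your global and \p-path-almost-open remarks are correct and essentially agree with the paper.
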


\begin{proof}
Let $x_0\in X$ be arbitrary and find $r_0>0$ so that the assumptions
of Theorem~\ref{thm-KZ-proper} hold for $B_0=B(x_0,r_0)$.
Then choose a radius $0<r_0'\le (7\la)^{-1}r_0$ so that 
$B_0':=B(x_0,r_0') \ne X$ and $\dist(x_0,X \setm B_0')=r_0'$.
For $B\subset B_0'$ it then follows that $r_B\le2r_0'$ and hence 
$3\la B \subset B(x_0,r_0)$.
The first statement then follows from Theorem~\ref{thm-KZ-local}.

Since $\la$ in Theorem~\ref{thm-KZ-local} depends on $B_0$, we cannot
directly obtain a semilocal conclusion 
(under semilocal assumptions) from it.
However, under global assumptions, the constants $C$, $\la$, $q<p$ 
and $A_0$ will be independent of $B_0$, which yields the global result.

To reach a semilocal conclusion under semilocal assumptions, 
we instead note that
$\Xhat$ is proper and connected, by
Proposition~\ref{prop-tot-bdd-2/3} and 
the proof of \cite[Proposition~4.2]{BBbook}.
Theorem~5.4 in \cite{BBsemilocal} then implies that for every ball 
$B_0'=B(x_0,r_0')\subset X$
there exist constants $C$, $\la$ and $q<p$, such that
\eqref{eq-KZ-local} holds for all balls $\Bhat\subset \Bhat(x_0,r_0')$,
all integrable functions $\uhat$ on $\la\Bhat$ and all 
$q$-weak upper gradients $\ghat$ of $\uhat$
with respect to~$\la\Bhat$.
By enlarging $r_0'$ if necessary, we may 
assume that $\dist(x_0,X \setm B_0')=r_0'$. 
(If $B_0'=X$, we instead note that $X$ is bounded and thus 
semilocal assumptions are the same as global assumptions,
which were handled above.)
If now $B\subset B_0'$ is a ball then $r_B\le2r_0'$ and hence 
$\la B \subset (1+2\la)B_0'=:B_0$.
Theorem~\ref{thm-Xhat-semi}, applied to $B_0$ and followed by 
\eqref{eq-from-Xhat-to-X}, then yields~\eqref{eq-KZ}.

The last part about $q$-weak upper gradients
in the \p-path almost open case
follows as in the (last part of the)
proof of Theorem~\ref{thm-KZ-local}.
\end{proof}

\section{\texorpdfstring{\p}{p}-harmonic functions in noncomplete spaces}

In this section we conclude the paper with a discussion on possible 
directions for developing the theory of \p-harmonic functions and
quasiminimizers on noncomplete spaces.

\emph{Let $\Om\subset X$ be open throughout this section.}
Traditionally, e.g.\ in $\R^n$ and other complete spaces, \p-harmonic
functions on $\Om$ are required to belong to the local space $\Np\loc(\Om)$
and their \p-harmonicity is tested by sufficiently smooth (e.g.\ 
Lipschitz or Sobolev) functions $\phi$ with compact support in $\Om$
(or with zero boundary values) as follows:
\begin{equation} \label{eq-deff-p-harm}
      \int_{\phi\ne0} g^p_u \, d\mu 
           \le \int_{\phi\ne0} g_{u+\phi}^p \, d\mu.
\end{equation}
For practical applications it can then often be shown that the \p-harmonicity
can equivalently be tested by other classes of test functions as well.
Let us have a closer look at these spaces.
In Section~\ref{sect-prelim}, we defined $\Np\loc(\Om)$ as the space 
of all functions $u$ such that 
\[
\text{for every $x\in\Om$ there exists a ball $B_x\ni x$ such that
$u\in\Np(B_x)$.}
\]
It is an easy exercise to see that if $\Om$ is locally compact then this 
definition is equivalent to the requirement that 
\[
\text{$u\in\Np(G)$ for all
open sets $G$ such that $\clG$ is a compact 
subset of $\Om$.}
\]
Note that $\Om$, being an open subset of $X$, is always locally compact if
$X$ is proper.
Also recall that, by Proposition~\ref{prop-tot-bdd-2/3}, if $\mu$ is
semilocally doubling then $X$ is proper if and only if it is complete.

In noncomplete spaces, defining $\Np\loc(\Om)$ through compact subsets of
$\Om$ might not be so useful, since there may be 
no (or very few) nonempty open sets with compact closures.
The same applies to the definitions of the space of test functions
in~\eqref{eq-deff-p-harm}.
We therefore consider the following families of bounded open subsets 
of $\Om$:
\begin{align*}
\Gcpt &= 
\{\text{bounded open }G\subset\Om: \clG \subset\Om \text{ is compact} \} \\
\Gdist &= 
\{\text{bounded open }G\subset\Om: \dist(G,X\setm\Om)>0 \} \\
\Gbdy &= 
\{\text{bounded open }G\subset\Om: \dist(G,\bdry\Om)>0 \} \\
\Gclos &= 
\{\text{bounded open }G\subset\Om: \clG \subset\Om \}. 
\end{align*}
(Here we consider $\dist(G,\emptyset)>0$.)

It is easily verified that
\[
\Gcpt \subset \Gdist \subset \Gbdy \subset \Gclos.
\]
Hence, if the local Newtonian space $\Nxloc(\Om)$, with 
${\bf x} \in \{\text{cpt}, \text{dist}, \text{bdy},\text{clos}\}$,
is defined by
\[
\Nxloc(\Om) = \{u:\Om\to\eR: u\in \Np(G) \text{ for all } G\in \Gx \},
\]
then
\[
\Nclosloc(\Om) \subset \Nbdyloc(\Om) \subset \Ndistloc(\Om) \subset 
\Np\loc(\Om) \subset \Ncptloc(\Om),
\]
where the last two inclusions follow from the fact that every
ball $B(x,r_x)$ with $x\in\Om$ and $r_x<\dist(x,X\setm\Om)$ belongs
to $\Gdist$ and that every compact set can be covered by finitely many
such balls.

If $X$ is proper then clearly $\Gcpt=\Gclos$ and all the above five
local Newtonian spaces coincide,
while the last two spaces always coincide if $X$ is locally compact.
Depending on $X$ and $\Om$, some partial equalities are possible also
in noncomplete spaces, see Example~\ref{ex-slit-plane}.

Now we turn our attention to the spaces of test functions 
in~\eqref{eq-deff-p-harm} and define:
\begin{alignat*}{2}
\Np_0(E) &= \{\phi|_E: \phi\in\Np(X) \text{ and } 
\phi=0 \text{ in }X\setm E\}
&\quad& \text{for } E\subset X,\\
\Nxo(\Om) &= \overline{ \{\phi:\Om\to\eR: \phi\in \Np_0(G) 
\text{ for some } G\in \Gx \}},
\end{alignat*}
where the closure is taken in $\Np(X)$ and functions in $\Np_0(G)$
are regarded as extended by zero outside of $G$. 
Alternatively, only the noncomplete spaces 
\[
\{\phi:\Om\to\eR: \phi\in \Np_0(G) \text{ for some } G\in \Gx \}
\]
could be considered.
Since $\Np_0(\Om)$ is closed in $\Np(X)$ (by \cite[Theorem~2.36]{BBbook}), 
we immediately see that
\begin{equation}   \label{eq-incl-Np0}
\Ncpto(\Om) \subset \Ndisto(\Om) \subset \Nbdyo(\Om) 
\subset \Ncloso(\Om) \subset \Np_0(\Om).
\end{equation}
As before, if $X$ is proper then the first four spaces of test functions
coincide.
If, in addition, all functions in $\Np(X)$ are quasicontinuous 
then \cite[Lemma~5.43]{BBbook}
implies that $\Np_0(\Om)= \Ncloso(\Om)$, 
i.e.\ all the above spaces of test functions coincide.
This in particular holds if $X$ is locally compact and $\mu$ 
is locally doubling and supporting a local \p-Poincar\'e inequality,
by \cite[Theorem~9.1]{BBsemilocal}.

There are also other classes of test functions that one can consider, 
e.g.\ 
\[
    \Npc(\Om) =
   \overline{\{\phi \in \Np(X) : \supp \phi \text{ is a compact subset of } \Om\}},
\]
i.e.\ $\Npc(\Om)$ is defined as $\Ncpto(\Om)$ 
omitting the word ``open'' in $\Gcpt$.
If $X$ (or $\Om$) is locally compact, then $\Npc(\Om)=\Ncpto(\Om)$,
but this is not true in general.
On the other hand, 
since $X$ is a normal topological space, we have
\[
    \Nxo(\Om) = \overline{ \{\phi:\Om\to\eR: \phi\in \Np_0(E) 
\text{ for some } E\in \Ex \}}
 \quad \text{if }{\bf x}  \in \{\text{dist}, \text{bdy},\text{clos}\},
\]
where $\Ex$ is defined as $\Gx$ but omitting the word ``open''.
Test function 
classes based on Lipschitz functions similarly to any of the above classes
are also possible, see Bj\"orn--Marola~\cite[Section~4]{BMarola}.
We will not discuss these classes of test functions further.

Each of the local Newtonian spaces, defined above, 
can appear in the definition of \p-harmonic functions, 
together with one of the above spaces of test functions.

\begin{deff} \label{def-quasimin}
A function $u \in \Nxloc(\Om)$ is a 
\emph{$Q$-quasi\/\textup{(}super\/\textup{)}minimizer} in $\Om$ if 
\begin{equation} \label{eq-deff-qmin}
      \int_{\phi\ne0} g^p_u \, d\mu 
           \le Q \int_{\phi\ne0} g_{u+\phi}^p \, d\mu
\end{equation}
for all (nonnegative/nonpositive) $\phi\in\Nyo(\Om)$.
If $Q=1$ in \eqref{eq-deff-qmin} then $u$ is 
a \emph{\textup{(}super\/\textup{)}minimizer}.
A \emph{\p-harmonic function} is a continuous minimizer.
\end{deff}

Here each of {\bf x} and {\bf y} stands for one of the above defined subscripts,
or the absence of such a subscript in the case of $\Nploc$ and $\Np_0$.
Naturally, different choices of {\bf x} and {\bf y} in 
Definition~\ref{def-quasimin} lead to different classes of quasiminimizers
and \p-harmonic functions, which may have advantages and disadvantages,
depending on the situation and the intended applications.
In Example~\ref{ex-slit-plane} below we demonstrate some of these differences,
but first we show that interior regularity can be obtained for most of these 
definitions.

The largest class of (quasi)minimizers is obtained when allowing for a 
large local Newtonian space and by testing 
with as few test functions as possible. 
This is reflected in the choice of function spaces in
the following regularity result.
To cover also non-locally compact spaces,
we exclude the definitions involving $\Gcpt(\Om)$,
as well as $\Npc(\Om)$.
We say that a function $u:\Om\to\eR$ is \emph{lsc-regularized} if
\[
u(x)=\essliminf_{y \to x} u(y) \quad \text{for all } x \in \Om.
\]

\begin{thm}   \label{thm-semiloc-int-reg}
Assume that $\mu$ is locally doubling and supports a local 
\p-Poincar\'e inequality in $\Om$.
Let $u\in\Np\loc(\Om)$ be a $Q$-quasi\/\textup{(}super\/\textup{)}minimizer 
in $\Om$, tested by $\phi \in \Ndisto(\Om)$.
Then $u$ has a representative $\ut$ which is continuous 
\textup{(}resp.\ lsc-regularized\/\textup{)}.

Moreover, the strong minimum principle holds for $\ut$: 
if $\Om$ is connected and $\ut$ attains
its minimum in $\Om$ then it must be constant.
\end{thm}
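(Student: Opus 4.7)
The plan is to reduce the theorem to the locally compact completion side via Lemma~\ref{lem-ext-Nploc} and then invoke the already known interior regularity for quasi(super)minimizers in that setting. Applying Lemma~\ref{lem-ext-Nploc} to $u\in\Np\loc(\Om)$ provides an open set $\Ghat\supset X$ in $\Xhat$, which is locally compact and on which $\mu$ is locally doubling and supports a local \p-Poincar\'e inequality, together with a function $\uhat\in\Nploc(\Ghat)$ that equals $u$ $\CpX$-q.e.\ on $X$. Setting $\Theta:=\Ghat\cap\Omhat$, I obtain an open set in $\Xhat$ containing $\Om$, inheriting the local structural assumptions, and satisfying $\mu(\Theta\setm\Om)=0$.

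The key step is to verify that $\uhat$ is a local quasi(super)minimizer on $\Theta$ tested by functions in $\Ncpto(\Theta)$. Given $\phihat\in\Np_0(\widehat{G}')$ with $\widehat{G}'\in\Gcpt(\Theta)$, I restrict $\phi:=\phihat|_X$. Since the compact set $\overline{\widehat{G}'}\subset\Theta\subset\Omhat$ is disjoint from the closed set $\Xhat\setm\Omhat$, one has $\dist_{\Xhat}(\widehat{G}',\Xhat\setm\Omhat)=\dist_X(\widehat{G}'\cap X,X\setm\Om)>0$, so $\phi$ lies in the generating family for $\Ndisto(\Om)$ and has the same sign as $\phihat$. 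Covering the compact set $\overline{\widehat{G}'}$ by finitely many of the Whitney-type balls $\Bhat_j$ from the proof of Lemma~\ref{lem-ext-Nploc}, Theorem~\ref{thm-Xhat-semi} applied on each $\Bhat_j$ yields $g_{\uhat}\le A\,g_u$ a.e.\ on $\widehat{G}'\cap X$, with $A$ depending only on the finitely many local constants involved. Combining this with the fact that restrictions of \p-weak upper gradients remain so (hence $g_{u+\phi}\le g_{\uhat+\phihat}|_X$ a.e.), $\mu(\Xhat\setm X)=0$, and the quasi(super)minimizer hypothesis for $u$, I obtain
\begin{align*}
\int_{\phihat\ne0} g_{\uhat}^p\,d\mu
  &= \int_{\phi\ne0} g_{\uhat}^p\,d\mu
   \le A^p\int_{\phi\ne0} g_u^p\,d\mu \\
  &\le A^pQ\int_{\phi\ne0} g_{u+\phi}^p\,d\mu
   \le A^pQ\int_{\phihat\ne0} g_{\uhat+\phihat}^p\,d\mu,
\end{align*}
so $\uhat$ is locally an $A^pQ$-quasi(super)minimizer on $\Theta$.

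The interior regularity theory for quasi(super)minimizers in locally compact metric spaces with local doubling and local \p-Poincar\'e inequalities, as developed in \cite{BBsemilocal}, then supplies a continuous (respectively lsc-regularized) representative $\tilde\uhat$ of $\uhat$ on $\Theta$ satisfying the strong minimum principle on every connected open subset of $\Theta$. Setting $\ut:=\tilde\uhat|_\Om$ gives a representative of $u$ on $\Om$, since $\uhat=u$ $\CpX$-q.e.\ in $\Om$. Continuity is preserved under restriction to the open subset $\Om$, and lsc-regularization transfers because, for $x\in\Om$ and sufficiently small $r$, the sets $B(x,r)$ and $\Bhat(x,r)$ differ by a $\mu$-null set, so the ess-liminf's defining lsc-regularity on $\Om$ and on $\Theta$ coincide.

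Finally, for the strong minimum principle, if $\Om$ is connected and $\ut$ attains its minimum at some $x_0\in\Om$, then $\mu(\Theta\setm\Om)=0$ together with the lsc-regularization of $\tilde\uhat$ force $\tilde\uhat(x_0)$ to also equal the infimum of $\tilde\uhat$ over the connected component $U$ of $\Theta$ containing $\Om$ (such a component exists because $\Om\subset\Theta$ is connected). The strong minimum principle on $U$ then makes $\tilde\uhat$ constant on $U$, and hence $\ut$ constant on $\Om$. The main obstacle is the middle step: transferring the quasi(super)minimizer property from $u$ to $\uhat$ while controlling the position-dependent extension constant $A_0$ from Theorem~\ref{thm-Xhat-semi} and ensuring that compactly supported test functions on $\Theta$ restrict to $\Ndisto$-admissible test functions on $\Om$; both are manageable because interior regularity is a local statement, so a region-dependent quasi-constant is harmless, and because $\Xhat$-distances from bounded subsets of $\Om$ to $\Xhat\setm\Omhat$ agree with $X$-distances to $X\setm\Om$.
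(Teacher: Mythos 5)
Your reduction strategy is genuinely different from the paper's. The paper does not extend the quasi(super)minimizer to the completion at all: it argues directly on $\Om$, covering $\Om$ by balls $B_j$ within which all assumptions hold with fixed constants, and then observes that the weak Harnack arguments of \cite{KiSh01} and \cite[Section~5]{KiMa03} are local in nature and never use completeness, so they run within each $B_j$. The only completion machinery the paper invokes is Theorem~\ref{thm-KZ-local}, and only to secure the better $q$-Poincar\'e inequality used inside those Harnack proofs. You, by contrast, extend $u$ to $\uhat$ on a locally compact $\Theta$, transfer the quasi(super)minimality (with a region-dependent constant $A^pQ$), and then want to apply an already-existing interior regularity theory on $\Theta$.

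The gap is precisely at that last step. You write that the regularity you need is ``as developed in \cite{BBsemilocal}''. But \cite[Section~10]{BBsemilocal} treats nonlinear potential theory under \emph{semilocal} assumptions on a \emph{proper connected} space, not under merely local assumptions on a locally compact one. Your $\Theta$ is only locally compact (not proper) and only satisfies the local hypotheses, so that theory does not apply to it off the shelf. This is not an accident: if such a ready-made local-assumption regularity theorem on locally compact spaces existed, the paper would simply cite it (at least in the locally compact case) rather than re-examining the arguments of \cite{KiSh01} and \cite{KiMa03}. In short, the localization work that the paper carries out is exactly what your citation is implicitly asking the reader to take on trust, so your reduction does not actually shorten the proof — the core content is the same and it is missing from your write-up. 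A secondary but fixable slip: applying Lemma~\ref{lem-ext-Nploc} to $u\in\Nploc(\Om)$ (with $\Om$ as the ambient space, which is what your hypotheses support) produces an open set $\Ghat\supset\Om$ inside the metric completion $\widehat{\Om}$ of $\Om$, not an open $\Ghat\supset X$ in $\Xhat$ as you state; the subsequent intersection with $\Omhat$ and the interpretation of $\Np_0$ and $\Gcpt$ relative to the correct ambient space need to be adjusted accordingly. Your handling of the test-function transfer, the region-dependent constant $A$, and the lsc-regularity and minimum-principle transfers are otherwise sound.
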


A bit surprising, perhaps, is that the weak minimum principle,
which compares infima on sets and their boundaries,
does not follow, see Example~\ref{ex-slit-plane} below.

Note that the local assumptions on $\mu$ in 
Theorem~\ref{thm-semiloc-int-reg} are required only in $\Om$, but
the ambient space $X$ plays an implicit role in the definition
of quasi(super)\-mi\-ni\-miz\-ers through the range
of test functions $\phi \in \Ndisto(\Om) \subset\Np(X)$.
Under global assumptions, (H\"older) continuity of (quasi)minimizers 
has been deduced on metric spaces in  
Kinnunen--Shanmugalingam~\cite{KiSh01},
Kinnunen--Martio~\cite{KiMa02}, \cite{KiMa03}
and Bj\"orn--Marola~\cite{BMarola}. 
In \cite{KiMa02} and \cite{KiMa03}
completeness was assumed but not used for these results,
although it certainly influenced their formulation
of the definition of quasi\-(super)\-mini\-mizers
(using $\Gcpt$ in our notation).

In addition to having stronger assumptions on $X$, these papers
also use more restrictive definitions of $\Nploc(\Om)$ 
and/or a larger class of test functions than here.
The local space $\Nploc(\Om)$ in~\cite{KiMa02} and~\cite{KiMa03}
coincides with our $\Ncptloc(\Om)$
and their test functions belong to $\Ncpto(\Om)$
(which equals $\Np_0(\Om)$ because of the assumed completeness), 
while \cite{BMarola} uses $\Ndistloc(\Om)$ and $\Ndisto(\Om)$.
In \cite{KiSh01}, the test functions belong to $\Ncloso(\Om)$,
while the definition of $\Nploc(\Om)$ is through bounded
subsets and imposes integrability conditions also near the boundary
$\bdry\Om$, so that it coincides with $\Np(\Om)$ for bounded $\Om$.

The smallest test space $\Ncpto(\Om)$ is used in~\cite{KiMa02} 
and~\cite{KiMa03}, as well as in Holopainen--Shanmugalingam~\cite{HoSh}
(in locally compact spaces).
Such a definition guarantees that \p-harmonicity in each $\Om_j$ for an 
increasing exhaustion $\Om_1\subset\Om_2\subset\cdots$ implies
\p-harmonicity in $\Om=\bigcup_{j=1}^\infty\Om_j$.
This need not be true with the other classes of test functions, as seen 
in Example~\ref{ex-slit-plane} below.
At the same time, in general spaces there may be 
no nonempty open sets with compact closures.

\begin{proof}[Proof of Theorem~\ref{thm-semiloc-int-reg}]
For each $x \in \Om$ there are a ball $B_x$ and $\la_x$ 
such that $u\in\Np(B_x)$
and the doubling property and the \p-Poincar\'e inequality hold
within $2B_x \subset \Om$ with dilation constant $\la_x$.
As $X$ is Lindel\"of we can find countably many balls
$\{B_j\}_{j=1}^\infty$ such that $B_j=(50\la_{x_j})^{-1}B_{x_j}$
and $\Om\subset\bigcup_{j=1}^\infty B_j$.

To deduce continuity, we need to obtain suitable weak Harnack inequalities 
for $u$ within each ball $B_j$, in the sense that 
they hold with fixed
constants (depending on $j$) for each ball $B \subset B_j$.
The arguments for proving such weak Harnack inequalities 
in Kinnunen--Shanmugalingam~\cite{KiSh01}  and 
Kinnunen--Martio~\cite[Section~5]{KiMa03} 
are all local (and do not use 
completeness), so local assumptions are enough for them.
They do rely on a better $q$-Poincar\'e inequality for some $q<p$ but
it is only applied to $L^p$-integrable \p-weak upper gradients and thus
inequality~\eqref{eq-KZ} provided by Theorem~\ref{thm-KZ-local} is sufficient.

If $u$ is a quasiminimizer then
it is a standard procedure using these weak Harnack inequalities 
to deduce continuity for a representative of $u$ in $\Om$, see
\cite{KiSh01} for the details.
This even gives local H\"older continuity, but without uniform
control of the H\"older exponent, since it locally depends on the constants
within each $B_j$, and the $B_j$
in turn depend both on $\Om$ and $u$.

If $u$ is a quasisuperminimizer then also the Lebesgue
point result provided by Proposition~\ref{prop-Leb-pt} is needed.
The lsc-regularity for a representative of $u$ in $\Om$ then
follows as in \cite[Theorem~5.1]{KiMa03} or 
Bj\"orn--Bj\"orn--Parviainen~\cite[Theorem~6.2]{BBParv}.

Finally, because of the weak Harnack inequalities, the strong minimum
principle for $\ut$ follows as in the proof of \cite[Theorem~8.13]{BBbook}.
\end{proof}

\begin{example}   \label{ex-slit-plane}
Let $X=\R^2\setm ([-1,\infty)\times\{0\})$ be the slit plane,
i.e.\ $\R^2$ with a ray removed.
Equip $X$ with the Euclidean metric and the Lebesgue measure.
Note that $X$ is locally compact and $\mu$ is globally doubling
and supports a local $1$-Poincar\'e inequality.
Also let $\Om=(-1,1)\times(0,2)\subset X$.

Since $\dist(G,X\setm\Om)= \dist(G,\R^2\setm\Om)$ for every open $G\subset\Om$,
it is easily seen that $\Gcpt(\Om)=\Gdist(\Om)$.
On the other hand, 
\[
G=\bigl( -\tfrac12,\tfrac12 \bigr) \times (0,1) \in \Gbdy(\Om)\setm\Gdist(\Om)
\]
shows that $\Gdist(\Om) \varsubsetneqq \Gbdy(\Om)$.
Similarly, the closure of 
\[
H=\{(x_1,x_2)\in\Om: |x_1|+x_2<1\}, 
\]
taken with respect to $X$, satisfies 
$\itoverline{H}\subset\Om$ and hence $H\in \Gclos(\Om)\setm\Gbdy(\Om)$.
We thus conclude that
\begin{equation} \label{eq-G-ex}
\Gcpt(\Om) = \Gdist(\Om) \varsubsetneqq \Gbdy(\Om) \varsubsetneqq \Gclos(\Om),
\end{equation}
which immediately implies that
\[
\Ndistloc(\Om) = \Np\loc(\Om) = \Ncptloc(\Om).
\]
On the other hand, the functions $|x-(1,1)|^{-1}$, $|x-(1,0)|^{-1}$ and $|x|^{-1}$
show that
\[
   \Np(\Om) \subsetneq \Nclosloc(\Om) \subsetneq \Nbdyloc(\Om) 
    \subsetneq \Ndistloc(\Om).
\]

For the zero spaces, it follows from \eqref{eq-G-ex} that
\[
\Npc(\Om)= \Ncpto(\Om) = \Ndisto(\Om).
\]
At the opposite end of the chain~\eqref{eq-incl-Np0} of zero spaces, 
it follows from \cite[Theorem~9.1]{BBsemilocal} 
that all functions in $\Np(X)$ are quasicontinuous and hence 
\cite[Lemma~5.43]{BBbook} implies that 
\[
\Ncloso(\Om)=\Np_0(\Om).
\]
Furthermore, by regarding $\Om$ as a subset of $\R^2$, we can conclude that
every function in $\Ndisto(\Om)$ extends by zero to a function in $\Np(\R^2)$
and hence has boundary values~0 q.e.\ on  $[-1,1]\times\{0\}$. 
Since it is easily verified that 
$\dist(\,\cdot\,,\bdry\Om)\in \Nbdyo(\Om)$, this implies that
\[
\Ndisto(\Om) \varsubsetneqq \Nbdyo(\Om).
\]
To investigate the remaining inclusion $\Nbdyo(\Om) \subset \Ncloso(\Om)$,
assume that $\phi\in\Np_0(G)$ for some $G\in\Gclos(\Om)$
with $\dist(G,\bdry\Om)=0$.
The only limit points (with respect to $\R^2$) that $G$ and $\bdry\Om$
can share, are $z_{\scriptscriptstyle\pm}=(\pm1,0)$.

Next, we distinguish between $p\le2$ and $p>2$.
Since singletons in $\R^2$ have zero \p-capacity, when $p\le2$, 
there exist Lipschitz cut-off functions 
$\eta_j$ supported in $B(z_\limplus,2/j)\cup B(z_\limminus,2/j)$ such that
\[
\eta_j=1 \text{ in } B(z_\limplus,1/j)\cup B(z_\limminus,1/j)
\quad \text{and} \quad 
\|\eta_j\|_{\Np(\R^2)}\to0 \text{ as } j\to\infty.
\]
The functions $(1-\eta_j)\phi$ then belong to $\Nbdyo(\Om)$ and
approximate any bounded $\phi$ in the $\Np(X)$-norm.
As unbounded functions can be approximated by their truncations,
this shows that 
\[
\Nbdyo(\Om)=\Ncloso(\Om) \quad \text{for } p\le2.
\]
For $p>2$, we proceed as follows.
Since $\mu$ is globally doubling and supports a global
1-Poincar\'e inequality
on the upper half-plane, Theorem~\ref{thm-Xhat-semi} implies that
$\phi$ extends to $\phihat\in\Np(\R\times[0,\infty))$, with the same
norm and minimal \p-weak upper gradient.
Moreover, $\phihat$ is continuous and $\phihat(z_{\scriptscriptstyle\pm})=0$.
This implies that the functions $(\phi-1/j)_\limplus\in\Nbdyo(\Om)$
approximate $\phi_{\limplus}$ in the $\Np(X)$-norm and thus
\[
\Nbdyo(\Om)=\Ncloso(\Om) \quad \text{also for } p>2.
\]

By varying both the local Newtonian space for (quasi)minimizers and the
class of test functions in~\eqref{eq-deff-qmin} one obtains different
definitions.
Let us have a closer look at some extreme cases:

1.\ Assume that $u\in\Np\loc(\Om)$ and test with 
$\phi\in\Ncpto(\Om)=\Ndisto(\Om)$.
This gives the most general definition and the largest class of 
(quasi)minimizers.
Since $\Om$ is open in $\R^2$, we have $\Np\loc(\Om)=W^{1,p}\loc(\Om)$,
the usual local Sobolev space on Euclidean domains.
It follows that this definition provides us with the usual \p-harmonic
functions and quasiminimizers on the Euclidean domain $\Om\subset\R^2$.

However, since the boundary $\bdry\Om$ with respect to $X$ does not
include the segment $[-1,1]\times\{0\}$, it is easily verified that
uniqueness is lost in the Dirichlet problem 
for \p-harmonic functions
when the boundary data
are only prescribed on $\bdry\Om$, e.g.\ by requiring that $u-f\in\Np_0(\Om)$.
A remedy of this problem is achieved by a larger class of test functions below.

Furthermore, the weak maximum principle is violated,
as well as certain (weak) Harnack inequalities with respect to balls
in $\Om\subset X$.
To see this, consider e.g.\ the usual fundamental solution 
\begin{equation}  \label{eq-fund-sol}
u(x)= \begin{cases}  |x|^{(p-2)/(p-1)}, & p\ne2, \\
                       - \log |x|, &p=2,
        \end{cases}
\end{equation}
for the \p-Laplacian $\Delta_pu :=\dvg(|\grad u|^{p-2}\grad u)$.

This suggests that testing only with $\phi\in \Ncpto(\Om)=\Ndisto(\Om)$
and allowing (quasi)minimizers to belong to $\Np\loc(\Om)$ 
may be too generous and that $\Nbdyloc(\Om)$ or $\Nclosloc(\Om)$,
together with larger classes of test functions,
might be better choices.
On the other hand, the strong maximum principle, stating that a nonconstant
\p-harmonic function cannot attain its maximum in $\Om$,
as well as (weak) Harnack inequalities with respect to compact subsets 
of $\Om$ remain true even in this situation.

2.\ The space $\Nbdyo(\Om)=\Ncloso(\Om)=\Np_0(\Om)$
allows for test functions which need not vanish on the real axis.
This indirectly forces the (quasi)minimizers on 
$\Om\subset X$ to have zero Neumann boundary values 
on $ (-1,1) \times\{0\} $,
i.e.\ on the ``missing'' boundary segment.
This fails e.g.\ for the linear function $(x_1,x_2)\mapsto x_2$, which 
is thus not \p-harmonic with this more restrictive definition.

As mentioned above, the zero Neumann condition on the ``missing'' boundary
may restore uniqueness in the Dirichlet problem (with respect to $X$).
Note, however, that for $p\le2$ the fundamental solution~\eqref{eq-fund-sol} 
still satisfies \eqref{eq-deff-qmin} with $Q=1$ 
for all test functions $\phi \in \Nbdyo(\Om)=\Ncloso(\Om)=\Np_0(\Om)$.
(Indeed, as singletons have zero \p-capacity, 
test functions in $\Nbdyo(\Om)$
can be approximated therein by test functions vanishing near the singularity 
$(0,0)$.)

A rather general existence and uniqueness result for the Dirichlet
problem for \p-harmonic functions with Sobolev boundary values
was given by Bj\"orn--Bj\"orn~\cite[Theorem~4.2]{BBnonopen},
which covers the case considered here.
There the functions are required to belong to $D^p(\Om)$ and the test
function space is $\Np_0(\Om)$.

3.\ For $p\le2$, the fundamental solution $u$ in~\eqref{eq-fund-sol} 
would be excluded (and the uniqueness in the Dirichlet problem restored)
if \p-harmonic functions were required to
belong to $\Nclosloc(\Om)$ or $\Nbdyloc(\Om)$.
However, the translated fundamental solutions satisfy 
\[
u(x-(0,1)) \in \Nbdyloc(\Om) \setm \Nclosloc(\Om)
\quad \text{and} \quad 
u(x-(1,1)) \in \Nclosloc(\Om) \setm \Np(\Om)
\]
and both can be tested by $\phi\in\Np_0(\Om)$.

For $p>2$, the fundamental solution~\eqref{eq-fund-sol} belongs to 
$\Np(\Om)$, but testing~\eqref{eq-deff-qmin} with 
\[
\phi=(1-u)_\limplus \in \Nbdyo(\Om) = \Ncloso(\Om)=\Np_0(\Om)
\]
shows that it is not \p-harmonic in $\Om\subset X$ with such a definition.
It is, however, a subminimizer with this class of test functions.

The above observations concerning the fundamental solutions hold also 
for the power functions $x\mapsto|x|^\al$
with $\al<1-2/p<0$ and $\al>1-2/p>0$, 
which are quasiminimizers in $\R^2\setm\{0\}$ for $p<2$ and $p>2$,
respectively, in view of 
Bj\"orn--Bj\"orn~\cite[Theorems~5.1 and~6.1]{BBpower}.
\end{example}

We have thus seen that the different possible definitions have various
pros and cons, and that the ``correct'' definition depends on
the particular applications or results one has in mind.
For example, suitable choices of spaces of test functions 
in noncomplete spaces also
make it possible to treat certain mixed boundary value problems within the 
scope of Dirichlet problems.

A seemingly simple way of treating (quasi)minimizers on noncomplete spaces
might be to use the completion $\Xhat$ of $X$
together with our main extension theorem (Theorem~\ref{thm-Xhat-semi}):
Starting with a quasiminimizer $u$ on some open subset $\Om$
of $X$ one would like to extend (a representative of) $u$
to a function $\uhat$ on some open subset $\Ghat$ of $\Xhat$,
which can be achieved using Lemma~\ref{lem-ext-Nploc}.
The next step would be to show that $\uhat$ is a quasiminimizer
in $\Ghat$, and then apply the potential theory for
quasiminimizers on $\Xhat$.

There are several conditions that need to be fulfilled for such 
an approach to be fruitful. 
First of all, in order to have a useful potential theory
on $\Xhat$, we need to assume that $\mu$ is locally
doubling  and supports a local \p-Poincar\'e inequality
on $\Xhat$. 
In view of Corollaries~\ref{cor-semilocal-doubling}
and~\ref{cor-Xhat-PI},
it seems that the most natural condition to impose on $X$ to achieve this 
is requiring that $\mu$ is semilocally doubling and
supporting a semilocal \p-Poincar\'e inequality on $X$,
which
ensures that 
these semilocal conditions also hold on $\Xhat$.
By \cite[Theorem~4.4 and the discussion following it]{BBsemilocal},
it follows that $\Xhat$ is also proper and connected.
Hence, most of the nonlinear potential
theory on metric spaces is available for $\Xhat$, see \cite[Section~10]{BBsemilocal}.

So assume that $\mu$ is semilocally doubling 
and supports a semilocal \p-Poincar\'e inequality on $X$.
The proof of Lemma~\ref{lem-ext-Nploc} shows that 
(a representative of) any
$u\in\Nxloc(\Om)$
extends to a function $\uhat \in \Nxloc(\Omhat)$ 
if ${\bf x} \in \{\text{dist}, \text{bdy},\text{clos}\}$.
For $u \in\Np\loc(\Om)$ there is only an extension to some 
open $\Ghat\subset\Xhat$ which may depend on $u$, see Example~\ref{ex-G-dep-u}.

Now, if~\eqref{eq-Xhat-local} in Theorem~\ref{thm-Xhat-semi} holds
for all such extensions with a uniform $A_0\ge1$
(e.g.\ if $X$ is locally compact or  
\p-path open in 
$\Xhat$, in which case $A_0\equiv1$) then~\eqref{eq-deff-qmin} implies that also
\[
      \int_{\phi \ne 0} g^p_{\uhat} \, d\mu 
            \le Q A_0^p \int_{\phi \ne 0} g_{\uhat+\phi}^p \, d\mu
\]
for all $\phi \in \Nyo(\Ghat)$
since $\phi|_X \in \Nyo(\Om)$.
In other words, $\uhat$ is a 
$QA_0^p$-quasi\-\/\textup{(}super\/\textup{)}\-mini\-mizer 
in $\Ghat$, where $\Ghat=\Omhat$
if ${\bf x} \in \{\text{dist}, \text{bdy},\text{clos}\}$.
(As before we omit the cases when ${\bf x} = {\rm cpt}$ or 
${\bf y} = {\rm cpt}$.)
This, in particular, implies that various local properties,
such as the H\"older continuity,
(weak) Harnack inequalities and maximum principles, hold for $\uhat$
in $\Ghat$, and thus also for $u$ in $\Om$.
When $A_0 \equiv 1$, even more can be said.

Another point is whether there is a one-to-one correspondence
between the (equivalence classes of) $Q$-quasiminimizers on
$\Om$ and on $\Omhat$, when $A_0 \equiv 1$.
From
\[
\Gdist(\Omhat)=\{\Ghat\subset\Omhat: \Ghat\cap X \in \Gdist(\Om)\}
\]
it follows that 
$\Ndistloc(\Omhat)=\Ndistloc(\Om)$
and $\Ndisto(\Omhat)=\Ndisto(\Om) $
(or more precisely there is a one-to-one correspondence between
the equivalence classes in these spaces),
which shows that such an equivalence holds
if ${\bf x}={\bf y}=\text{dist}$ (under the assumptions above).
On the other hand, 
this fails for the other families 
$\Gcpt$, $\Gbdy$ and $\Gclos$, and thus such
a correspondence is unlikely to hold in any other case.
Example~\ref{ex-slit-plane} gives
several counterexamples when applied
to $X_\limplus =X \cap (\R \times [0,\infty))$ on which global assumptions
hold.

Using the test function class $\Np_0(\Om)$, 
the Dirichlet and obstacle problems
on bounded (not necessarily open) sets in noncomplete spaces
with very weak assumptions were studied
in Bj\"orn--Bj\"orn~\cite{BBnonopen}.
Functions considered therein belong to $\Np(\Om)$,
so the different types of local spaces do not play a role in that
discussion.
The space $\Np_0(\Om)$ of test functions therein is large enough
to give a sufficiently restrictive definition of \p-harmonic functions
which, under rather mild assumptions, guarantees uniqueness in the
Dirichlet problem, cf.\ Parts~1--3 in our Example~\ref{ex-slit-plane}.

It was also shown in~\cite{BBnonopen}
that in complete spaces (with global assumptions)
Dirichlet and obstacle problems are naturally studied on 
quasiopen (or finely open) sets, but not really beyond that.
In our setting it could therefore be interesting to know what happens
when $X$ (and thus $\Om$) is quasiopen in $\Xhat$,
which
is closely related to its \p-path openness, 
see the comments after Theorem~\ref{thm-Xhat-semi}.

For a fruitful nonlinear potential theory in noncomplete spaces it may
be worth to consider how the fine potential theory on quasiopen (and finely open)
sets has been developed in $\R^n$ and in metric spaces, 
and in particular the role of so-called \p-strict
subsets, 
see Kilpel\"ainen--Mal\'y~\cite{KiMa92}, Latvala~\cite{LatPhD} and
 Bj\"orn--Bj\"orn--Latvala~\cite{BBLat3}.

In connection with the Dirichlet problem it would be interesting 
to develop a suitable theory for Perron solutions 
in noncomplete spaces. 
A major obstacle may however be the comparison principle
(as in \cite[Theorem~9.39]{BBbook}), since even
in the complete case (and with global assumptions)
it is not known whether
the boundary condition (for bounded functions) can be omitted
even at a single
point with zero capacity.
Perhaps a suitable theory could be developed if one assumes that the boundary is
compact, even though the underlying space may be noncomplete.

On the other hand, it would be interesting to study how continuous boundary
data should be treated on noncompact boundaries, in which case
there are at least three natural counterparts to the usual space
of continuous boundary data: continuous functions, bounded continuous
functions and uniformly continuous functions.
See Bj\"orn~\cite{ABcomb} for one study, in a very special case, treating
Perron solutions for \p-harmonic functions with a noncompact boundary;
and also Estep--Shanmugalingam~\cite{ES}.

\end{document}